\theoremstyle{definition}
\newtheorem{theorem}[equation]{Theorem}
\newtheorem{lemma}[equation]{Lemma}
\newtheorem{corollary}[equation]{Corollary}
\newtheorem{definition}[equation]{Definition}
\newtheorem{proposition}[equation]{Proposition}
\newtheorem{remark}[equation]{Remark}
\renewcommand{\phi}{\varphi}
\newcommand{\D}{\mathrm{d}}
\newcommand{\E}{\mathrm{e}}
\renewcommand{\(}{\bigl(}
\renewcommand{\)}{\bigr)\vphantom{)}}
\newcommand{\impl}{\;\Longrightarrow\;}
\newcommand{\vol}{\operatorname{vol}}
\newcommand{\width}{\operatorname{width}}
\newcommand{\const}{\operatorname{const}}
\newcommand{\eps}{\varepsilon}
\newcommand{\ga}{\gamma}
\newcommand{\de}{\delta}
\newcommand{\De}{\Delta}
\newcommand{\al}{\alpha}
\newcommand{\be}{\beta}
\newcommand{\cO}{\mathcal O}
\newcommand{\la}{\lambda}
\newcommand{\Ex}{\mathbb E\,}
\newcommand{\R}{\mathbb R}
\begin{document}

\title{Linear response and moderate deviations:\\ hierarchical
  approach. II}

\author{Boris Tsirelson}

\date{}
\maketitle

\begin{abstract}
The Moderate Deviations Principle (MDP) is well-understood for sums of
independent random variables, worse understood for stationary random
sequences, and scantily understood for random fields.
An upper bound for a new class of random fields is obtained here by
induction in dimension.
\end{abstract}

\setcounter{tocdepth}{2}
\tableofcontents

\numberwithin{equation}{section}

\section[Definition, and main result formulated]
  {\raggedright Definition, and main result formulated}
\label{sect1}
We examine a class of random fields $ X = (X_t)_{t\in\R^d}
$, but we are interested only in integrals $ \int_B X_t \, \D t $ over
boxes $ B = [\al_1,\be_1]\times\dots\times[\al_d,\be_d] \subset \R^d
$ (where $ \al_1<\be_1,\dots,\al_d<\be_d $) rather than ``individual''
random variables $ X_t $. Similarly to \cite[Sect.~1]{I} we merely
deal with a box-indexed family of random variables, denoted (if only
for convenience) by $ \int_B X_t \, \D t $ and satisfying additivity:
\begin{multline}\label{01.1}
\int_{[\al,\be]\times B} X_t \, \D t + \int_{[\be,\ga]\times B}
 X_t \, \D t = \\
= \int_{[\al,\ga]\times B} X_t \, \D t
 \quad \text{a.s.\ whenever } -\infty<\al<\be<\ga<\infty \, ,
\end{multline}
for all boxes $ B \subset \R^{d-1} $; this being additivity w.r.t.\
the first coordinate, the same is required for each coordinate.
We say that $X$ is \emph{centered,} if
\begin{equation}\label{01.3}
\Ex \bigg| \int_B X_t \, \D t \bigg| < \infty \quad \text{and} \quad \Ex
\int_B X_t \, \D t = 0 \quad \text{for all boxes } B \subset \R^d \, .
\end{equation}
Two more conditions, stationarity and mesurability, will be used
in \cite{III} when proving the moderate deviations principle.
Stationarity means measure preserving time shifts that send $ \int_B
X_t \, \D t $ to $ \int_{B+s} X_t \, \D t $. Measurability (under
stationarity) is similar to \cite[(1.2)]{I}:
\begin{multline}\label{01.2}
\text{the distribution of} \int_{[0,r_1]\times\dots\times[0,r_d]}
 X_t \, \D t \\
\text{ is a Borel measurable function of } (r_1,\dots,r_d) \, . 
\end{multline}
But for now, our goal being upper bounds, we need only \eqref{01.1}
and \eqref{01.3}, and call such $X$ a \emph{centered random field} (on
$\R^d$). 
The notions ``independent'' and ``identically distributed'' are
interpreted for such processes similarly to \cite[Sect.~1]{I}.
Sometimes, when convenient, we write $X(t)$ rather than $X_t$.

Splittability, defined in \cite[Def.~1.4]{I} for $ d=1 $, will be
defined here for all $d$.

First, given a centered random field $ X = (X_t)_{t\in\R^d} $, we define
a \emph{split} of $X$ as a triple of random fields $ X^0, X^-, X^+ $
(on some probability space) such that the two fields $ X^-, X^+ $ are
(mutually) independent and the four fields $ X, X^0, X^-, X^+ $ are
identically distributed. (Informally, a split is useful when its leak
defined below is small.) Clearly, $ X^0, X^-, X^+ $ are centered (since $X$
is).

Second, given $ r,a,b \in \R $, $ a<r<b $, and a split $ (X^0, X^-,
X^+) $ of $X$, we define the \emph{leak} (of this split) along the
hyperplane $ \{r\}\times\R^{d-1} $ on the strip $ [a,b] \times \R^{d-1}
$ as the random field $ (Y_t)_{t\in\R^{d-1}} $ where
\begin{multline*}
Y(t_2,\dots,t_d) = \\
\int_{a}^r X^- (t_1,t_2,\dots,t_d) \, \D t_1
+ \int_r^b X^+ (t_1,t_2,\dots,t_d) \, \D t_1 \,
- \int_a^b X^0 (t_1,t_2,\dots,t_d) \, \D t_1
\end{multline*}
in the sense that
\[
\int_B Y(t) \, \D t = \int_{[a,r]\times B} X^-(t) \, \D t
+ \int_{[r,b]\times B} X^+(t) \, \D t - \int_{[a,b]\times B}
X^0(t) \, \D t
\]
for all boxes $ B \subset \R^{d-1} $. Clearly, $Y$ is also a centered
random field. Similarly, for each $ k \in \{1,\dots,d\} $ we define
the leak along the hyperplane $ \R^{k-1}\times\{r\}\times\R^{d-k} $ on
the coordinate strip $ \R^{k-1} \times [a,b] \times \R^{d-k}
$. (Different $k$ and $r$ need different splits, to be useful.)

Given $X$, we need splits $ (X_{k,r}^0, X_{k,r}^-, X_{k,r}^+) $ of $X$
for all $ k=1,\dots,d $ and $ r \in \R $; and for all $ a \in
(-\infty,r) $, $ b \in (r,\infty) $ we consider the corresponding leak
$ Y_{k,r,a,b} $ of this split.
For $d=1$ the leak is just a single random variable $ Y = \int_a^r
X^-(t) \, \D t + \int_r^b X^+(t) \, \D t - \int_a^b X^0(t) \, \D t $.

Accordingly, a family $ (X_i)_{i\in I} $ of random fields on $ \R^d
$ leads to a family $ (Y_{k,r,a,b,i})_{k=1,\dots,d;a<r<b;i\in I} $ of
centered random fields on $ \R^{d-1} $.

Third, we consider a family of centered random fields $ (X_i)_{i\in I} = \(
(X_{t,i})_{t\in\R^d} \)_{i\in I} $ (the index set $I$ being arbitrary)
and define uniform splittability of such family. Then splittability of
a centered random field appears as the particular case of a single-element
set $I$. We define uniform splittability by recursion in the dimension
$ d = 0,1,2,\dots $, treating a centered random field on $ \R^0 $ as just a
single random variable of zero mean.

\begin{definition}\label{definition1}
A family $ (X_i)_{i\in I} $ of centered random fields $ X_i =
(X_{t,i})_{t\in\R^d} $ is \emph{uniformly splittable,} if either $ d=0
$ and
\[
\exists \eps>0 \; \forall i\in I \;\> \Ex \exp \eps |X_i| \le 2 \, ,
\]
or $ d \ge 1 $ and the following two conditions hold:

(a) there exist a box $ B \subset \R^d $ and $ \eps>0 $ such that
\[
\forall i\in I \;\> \forall
s \in \R^d \;\; \Ex \exp \eps \bigg| \int_{B+s} X_i(t) \, \D
t \bigg| \le 2 \, ,
\]

(b) there exist splits $ (X_{k,r,i}^0, X_{k,r,i}^-, X_{k,r,i}^+) $ of
each $X_i$, whose leaks are a uniformly
splittable family $ (Y_{k,r,a,b,i})_{k=1,\dots,d;a<r<b;i\in I}$.
\end{definition}

We use volume and width of a box $ B =
[\al_1,\be_1]\times\dots\times[\al_d,\be_d] \subset \R^d $:
\[
\vol B = (\be_1-\al_1) \dots (\be_d-\al_d) \, , \quad
\width B = \min (\be_1-\al_1, \dots, \be_d-\al_d) \, .
\]

The theorem below applies first of all to a single random field (that
is, a single-element set $I$); the general formulation enables the
proof by induction in the dimension.

\begin{theorem}\label{th}
For every uniformly splittable family $ (X_i)_{i\in I} $ of centered random
fields $ X_i = (X_{t,i})_{t\in\R^d} $ there exists $ C \in (1,\infty)
$ such that for every $ i \in I $, every box $ B \subset \R^d $ and
every $ \la \in \R $,
\begin{gather*}
\text{if} \;\>\, C |\la| \le \frac1{ \log^d \vol B } \;\; \text{ and
} \; \width B \ge C \, , \quad \text{then} \\
\log \Ex \exp \la \int_B X_{t,i} \, \D t \le C (\vol B) \la^2 \, .
\end{gather*}
\end{theorem}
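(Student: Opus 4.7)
The plan is induction on the dimension $d$. The base case $d = 0$ is standard: a centered random variable $X_i$ with $\Ex \exp(\eps|X_i|) \le 2$ satisfies $\log \Ex \exp(\la X_i) \le C_0 \la^2$ for all $|\la| \le \eps/2$, by Taylor-expanding $\exp(\la X_i)$ and combining $\Ex X_i = 0$ with the moment bounds $\Ex |X_i|^k \le 2 \cdot k! /\eps^k$.

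For the inductive step, fix $B = [\al_1, \be_1] \times B'$ with $B' \subset \R^{d-1}$, and let $w$ be the first-coordinate length of the box $B_0$ from condition~(a) of Definition~\ref{definition1}. Write $M(B, \la) := \log \Ex \exp \la \int_B X_i$. The core one-step recursion combines the splittability identity $\int_B X^0 = \int_{B_L} X^- + \int_{B_R} X^+ - \int_{B'} Y$ (at the split $\{r\} \times \R^{d-1}$, with $B_L = [\al_1,r] \times B'$, $B_R = [r,\be_1] \times B'$, and leak $Y$), the independence of $X^-, X^+$, and H\"older's inequality with $1/p + 1/q = 1$:
\[
M(B, \la) \le \tfrac{1}{p}\, M(B_L, p\la) + \tfrac{1}{p}\, M(B_R, p\la) + \tfrac{1}{q}\log \Ex \exp\(-q\la \int_{B'} Y\).
\]
The leak MGF on the right is bounded by the dimension-$(d{-}1)$ induction hypothesis applied to the uniformly splittable leak family of Definition~\ref{definition1}(b).

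Iterate this dyadically along coordinate~$1$ for $n = \lfloor \log_2((\be_1-\al_1)/w)\rfloor$ steps. This reduces $M(B,\la)$ to the sum of $2^n$ ``leaf'' MGFs of the form $M([\tilde\al,\tilde\al+w'] \times B', p^n\la)$ (with $w' \in [w,2w)$) plus, for each level $m = 1,\ldots,n$, a weighted contribution from $2^{m-1}$ leak MGFs at scale $q p^{m-1}\la$. Each leaf MGF is controlled by applying the dimension-$(d{-}1)$ induction hypothesis to the auxiliary family of \emph{integrated fields}
\[
Z_i^{(\tilde\al, w')}(t_2,\ldots,t_d) := \int_{\tilde\al}^{\tilde\al+w'} X_i(t_1,t_2,\ldots,t_d) \, \D t_1,
\]
which is uniformly splittable in dimension $d-1$: its condition~(a) descends from condition~(a) for $X_i$ (using the $(d{-}1)$-dimensional cross-section of $B_0$, plus a H\"older/subadditivity argument to cover lengths $w' \in [w,2w)$), and its splits come from splits of $X_i$ along coordinates $2,\ldots,d$, whose leaks are coord-$1$ integrals of the leaks of $X_i$. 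Choosing $p = 1 + 1/n$ (so $p^n \le e$) and $q$ a fixed constant, the total leaf contribution is bounded by $2^n p^n \cdot C_{d-1}\vol B' \la^2 \le e C_{d-1} \vol B \la^2/w$ and the geometric series of leak contributions by at most $qe C_{d-1} \vol B \la^2/w$, which together give $M(B,\la) \le C_d (\vol B) \la^2$ with $C_d = e(1+q) C_{d-1}/w$.

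The main obstacle is propagating the scalar hypothesis through the induction: each invocation of the dimension-$(d{-}1)$ theorem requires $|\mu| \le 1/(C_{d-1} \log^{d-1}\vol B')$ for the rescaled parameter $\mu \in \{p^n\la,\, q p^{m-1}\la\}$, and $\width B' \ge C_{d-1}$. The width condition is immediate since $\width B' \ge \width B \ge C_d \ge C_{d-1}$. The $\la$-condition follows from the hypothesis $C_d|\la| \le 1/\log^d \vol B$ because $\log^d \vol B \ge \log\vol B \cdot \log^{d-1}\vol B'$ (using $\vol B' \le \vol B$), and the width bound $\width B \ge C_d$ forces $\log\vol B \ge d\log C_d$, which for $C_d$ chosen large enough absorbs the multiplicative constants $e, q, C_{d-1}/C_d$ that appear in the reduction. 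The loss of one logarithmic factor per level of induction is precisely the reason the hypothesis in the theorem carries $\log^d$ rather than any lower power.
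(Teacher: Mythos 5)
There is a genuine gap, and it sits exactly where you try to wave it away. You iterate the one-step H\"older recursion with a \emph{fixed} exponent $p=1+1/n$ along coordinate $1$. But $1/p+1/q=1$ then forces $q = p/(p-1) = n+1$, which is not ``a fixed constant''; it grows with the number of doubling levels $n \sim \log_2\bigl((\be_1-\al_1)/w\bigr)$. Tracing the leak terms through, the level-$m$ contribution is $2^{m-1}\cdot\tfrac1{q\,p^{m-1}}\cdot C_1(\vol B')(q\,p^{m-1}\la)^2 = q\,(2p)^{m-1}C_1(\vol B')\la^2$, and summing over $m=1,\dots,n$ gives roughly $q\,e\,2^n C_1 (\vol B')\la^2 \approx (n+1)\,e\,C_1(\vol B)\la^2/w$. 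That extra factor of $n\sim\log(\vol B)$ means you prove $f_B(\la)\lesssim (\vol B)(\log\vol B)\la^2$, not $f_B(\la)\lesssim (\vol B)\la^2$, so the theorem's conclusion is missed by a logarithm. The constraint bookkeeping (absorbing the factor $q$ into the extra power of $\log$ in the hypothesis) only keeps the parameters inside the domain where the dimension-$(d-1)$ bound applies; it cannot repair the inflated constant in front of $(\vol B)\la^2$.

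This is precisely the difficulty the paper's Proposition~\ref{2a6} is built to defeat, and it does so with two ingredients you do not have. First, the box is halved along its \emph{longest} side at each step, so the leak is supported on a cross-section whose volume is at most $S(\vol B_{k+1}) = (\vol B_{k+1})^{(d-1)/d}$; consequently the per-step leak contribution carries a factor $1/R(\vol B_{k+1})=(\vol B_{k+1})^{-1/d}$ that decays geometrically in $k$. Second, the H\"older exponents are chosen \emph{level-dependently and optimally}: with $x_{k}=\sqrt{C_1/R(\vol B_{k+1})}$ one takes $p_k = 1 + x_{k}/A_k$, which minimises $A_k^2 p + x_k^2 q$ to give $A_{k+1}=A_k + x_k$; the $x_k$ form a convergent geometric series, so $A_k\uparrow A_\infty \le \sqrt{2a}$, and simultaneously $\prod_k p_k$ stays bounded because $p_k-1$ is summable. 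A fixed $p$ cannot achieve both at once. Even with these repairs, the paper only reaches the regime $C|\la|\le v^{-1/(2d)}\log^{-(d-1)}v$ (Prop.~\ref{2a1}); the final range $C|\la|\le\log^{-d}v$ is obtained by a separate telescoping argument in Section~\ref{sect3} on the renormalised function $\phi_v(\la) = f_v(\la)/(|\la|\sqrt v)$ via Corollaries~\ref{3.2}--\ref{3.5}. Your proposal has no analogue of this second stage. Finally, you also tacitly assume that the integrated fields $Z_i^{(\tilde\al,w')}$ form a uniformly splittable family in dimension $d-1$; this is plausible but requires a proof (in particular, that the family of \emph{their} leaks, after integration in $t_1$, remains uniformly splittable in dimension $d-2$), which you do not give. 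The paper avoids needing such an auxiliary family altogether by iterating on all coordinates simultaneously and reducing to a fixed-side-length box $B_0$ whose $f_{B_0}$ is controlled directly by Lemmas~\ref{2a9}--\ref{2a12}.
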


(Of course, $ \log^d \vol B $ is $ \( \log (\vol B) \)^d $.)
This theorem will be proved by induction in the dimension $ d =
1,2,\dots $ 
Throughout we assume that a uniformly splittable family $ (X_i)_{i\in
I} $ of centered random fields $ X_i = (X_{t,i})_{t\in\R^d} $ is given.
We assume that the theorem holds in dimension $d-1$, unless $d=1$; in
the latter case the proof needs trivial modifications.

According to Def.~\ref{definition1} we have splits (of the fields $
X_i $) whose leaks $ Y_{k,r,a,b,i} $ are a uniformly splittable family
(in dimension $d-1$). Theorem \ref{th}, applied to this family, gives
$C_1$ such that for all $ k,r,a,b,i $
\begin{equation}\label{1.6}
\log \Ex \exp \la \int_B Y_{k,r,a,b,i}(t) \, \D t \le C_1 (\vol B) \la^2
\end{equation}
whenever $ B \subset \R^{d-1} $ is a box of width $ \ge C_1 $, and $
C_1 |\la| \le \frac1{ \log^{d-1} \vol B } $.

For $ d=1 $ the leaks $ Y_{1,r,a,b,i} $ are just random
variables; their uniform splittability means
$ \exists \eps>0 \; \forall r,a,b \; \forall i \in
I \;\> \Ex \exp \eps|Y_{1,r,a,b,i}| \le 2 $. By Lemma \ref{1a8} below this
implies $ \log \Ex \exp \eps\la Y_{1,r,a,b,i} \le \la^2 $ for $ \la \in
[-1,1] $. Taking $ C_1 = \max \( \frac1{\eps^2}, 1 \) $ we get for all
$ r,a,b,i $
\begin{equation}\label{1.7}
\log \Ex \exp \la Y_{1,r,a,b,i} \le C_1 \la^2 \quad \text{whenever } C_1
|\la| \le 1 \, ,
\end{equation}
to be used for $ d=1 $ instead of \eqref{1.6}.

We borrow from \cite[Lemma 2a8]{I} a general fact.

\begin{lemma}\label{1a8}
If a random variable $Z$ satisfies $ \Ex \exp |Z| \le 2 $ and $ \Ex Z
= 0 $, then $ \log \Ex \exp \la Z \le \la^2 $ for all $ \la \in [-1,1]
$.
\end{lemma}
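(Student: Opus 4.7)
The plan is to Taylor-expand the exponential and exploit the two hypotheses term by term, together with the elementary estimate $\log(1+x)\le x$.

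First, I would write
\[
\Ex \exp(\la Z) = \sum_{n=0}^\infty \frac{\la^n\,\Ex Z^n}{n!},
\]
where the interchange of expectation and sum is justified because $\sum_n |\la|^n \Ex|Z|^n/n! = \Ex \exp(|\la||Z|) \le \Ex \exp |Z| \le 2$ for $|\la|\le 1$. The $n=0$ term is $1$, and the $n=1$ term vanishes by the assumption $\Ex Z = 0$. So the task reduces to bounding the tail $\sum_{n\ge 2} \la^n \Ex Z^n/n!$.

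Next, for $|\la|\le 1$ and $n\ge 2$ we have $|\la|^n \le \la^2$, so
\[
\bigg| \sum_{n\ge 2} \frac{\la^n \Ex Z^n}{n!} \bigg|
\le \la^2 \sum_{n\ge 2} \frac{\Ex|Z|^n}{n!}
= \la^2 \bigl(\Ex \exp|Z| - 1 - \Ex|Z|\bigr) \le \la^2,
\]
using $\Ex\exp|Z|\le 2$ and $\Ex|Z|\ge 0$ at the end. Combining, $\Ex\exp(\la Z)\le 1+\la^2$, and then $\log\Ex\exp(\la Z)\le \log(1+\la^2)\le \la^2$.

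There is no real obstacle here; the only thing worth being careful about is the absolute convergence needed to justify swapping expectation and infinite sum, and the observation that the trick $|\la|^n\le \la^2$ for $n\ge 2$ is exactly what converts the sub-exponential moment bound into a sub-Gaussian-type bound on the interval $\la\in[-1,1]$. Since the result is quoted from \cite{I} there is nothing to do beyond presenting this short computation.
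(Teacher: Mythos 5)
Your proof is correct. Note that the paper itself does not prove this lemma at all --- it is quoted from \cite[Lemma 2a8]{I} --- and your argument is the standard one: expanding $\Ex\exp(\la Z)$, killing the linear term with $\Ex Z=0$, and using $|\la|^n\le\la^2$ for $n\ge2$, $|\la|\le1$ together with $\Ex\exp|Z|-1-\Ex|Z|\le1$ to get $\Ex\exp(\la Z)\le1+\la^2\le\exp(\la^2)$; the justification of the interchange of sum and expectation via $\Ex\exp(|\la|\,|Z|)\le2$ is the one point that needed care, and you handled it.
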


Theorem \ref{th} is proved in Sect.~\ref{sect2} for $ \la = \cO \(
(\vol B)^{-\frac1{2d}} \log^{-(d-1)} \vol B \) $; larger $\la$ are
treated in Sect.~\ref{sect3}. I still do not know, what happens when
$\la$ tends to $0$ slower than $ \log^{-d} \vol B $. This logarithmic
gap between moderate and large deviations, is it a phenomenon or a
drawback of my approach?

\section[Far from large deviations]
  {\raggedright Far from large deviations}
\label{sect2}
\begin{proposition}\label{2a1}
There exists $ C \in (1,\infty)
$ such that for every $ i \in I $, every box $ B \subset \R^d $ of
volume $ v $ and width $ \ge C $, and every $ \la \in \R $,
\[
\text{if} \;\>\, C |\la| \le \frac1{ v^{1/(2d)} \log^{d-1} v } \,
, \quad \text{then} \;\>
\log \Ex \exp \la \int_B X_i(t) \, \D t \le C v \la^2 \, .
\]
\end{proposition}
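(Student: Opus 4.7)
The plan is to slice $B$ along one coordinate (the longest, WLOG the first), use splittability to replace the resulting sub-slabs by mutually independent copies plus a sum of leak terms, apply Hölder's inequality to decouple the two contributions, and control the leaks via the inductive bound \eqref{1.6} (or \eqref{1.7} when $d=1$). Write $B=[\al_1,\be_1]\times B'$ with $L:=\be_1-\al_1$, and partition $[\al_1,\be_1]$ into $n$ sub-intervals of length $\ell=L/n$, giving sub-slabs $S_1,\dots,S_n$; the scale $\ell\sim v^{1/(2d)}$ (so $n\sim Lv^{-1/(2d)}$) will be selected at the end.

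Iteratively applying Def.~\ref{definition1}(b) --- splitting $X_i$ first at $r_1$ on the strip $[\al_1,\be_1]$, then the right-hand field at $r_2$ on $[r_1,\be_1]$, and so on, each time coupling the split's ``zeroth copy'' to the running leftover field --- I would construct on a common probability space mutually independent fields $X^{(1)},\dots,X^{(n)}$ (each $\stackrel{d}{=}X_i$) together with leak-integrals $V_1,\dots,V_{n-1}$ (each of the form $\int_{B'} Y$ for a leak $Y$ along $\{r_j\}\times\R^{d-1}$) obeying the almost-sure identity
\[
 \int_B X_i = \sum_{j=1}^n \int_{S_j} X^{(j)} - \sum_{j=1}^{n-1} V_j =: U-V.
\]
Hölder's inequality with conjugate exponents $p=1+\eps$, $q=(1+\eps)/\eps$ then gives
\[
 \log\Ex\exp\la\int_B X_i \le \tfrac{1}{p}\log\Ex\exp p\la\,U + \tfrac{1}{q}\log\Ex\exp(-q\la V).
\]
The first term factors by independence of the $X^{(j)}$ into $n$ per-slab MGFs, which I would handle either by iterating the slicing in the remaining coordinates down to sub-boxes covered by translates of the reference box from Def.~\ref{definition1}(a) (invoking Lemma~\ref{1a8} at the base), or by an inner induction on the volume. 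The second term, whose $V_j$'s are not jointly independent, is reduced to individual leak MGFs by iterated Cauchy--Schwarz over the $n-1$ summands (Hölder with equal weights), costing a factor $n-1$ inside the argument of each MGF; each individual $\Ex\exp\mu V_j$ is then bounded by \eqref{1.6}.

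The scale $\ell\sim v^{1/(2d)}$ is forced by balancing two constraints: the resulting leak-sum contribution, of order $C_1 vq\la^2/\ell$, must be absorbed into $Cv\la^2$, and the admissibility $|(n-1)q\la|\lesssim 1/\log^{d-1}v$ of the iterated-Cauchy--Schwarz leak bound must be compatible with the hypothesis $C|\la|\le 1/(v^{1/(2d)}\log^{d-1}v)$ of the proposition. The main obstacle is thus the bookkeeping that simultaneously controls the $(1+\eps)$-overhead from Hölder on the independent side and the $n$-factor loss on the leak side; the restriction on $\la$ in the proposition is precisely the threshold below which both losses can be afforded at once.
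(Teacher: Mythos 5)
Your route — one-shot slicing into $n$ slabs, a single H\"older separation of the independent sum $U$ from the leak sum $V$, and iterated Cauchy--Schwarz over the $n-1$ leaks — is genuinely different from the paper's. The paper works with the shift-invariant cumulant functions $f_{B,i}$, proves a \emph{doubling} lemma (Lemma~\ref{2a5}: one halving, one leak), and then in Prop.~\ref{2a6} iterates it dyadically, \emph{always halving the longest side} and choosing the H\"older exponent $p_k$ at each step at the exact optimum $p_k=1+x_k/A_k$ so that $A_{k+1}^2 = (A_k+x_k)^2$ with $x_k=\sqrt{C_1/R(\vol B_{k+1})}$ decaying geometrically; this is what keeps the accumulated constant bounded ($A_\infty\le\sqrt{2a}$) and lets the admissible range $\De_k$ be tracked explicitly. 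That careful balance is the heart of the proof, and it has no counterpart in your sketch.

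The gaps in your proposal are concrete. First, the leak side is miscounted: with H\"older at weight $q$ and then iterated Cauchy--Schwarz at weight $n-1$, the contribution is $\tfrac1q\log\Ex\exp(-q\la V)\le C_1(\vol B')(n-1)^2 q\la^2\sim C_1\,vL\,q\la^2/\ell^2$, not $C_1 v q\la^2/\ell$ as you write; with $\ell\sim v^{1/(2d)}$ this is $\lesssim Cv\la^2$ only when $L\lesssim v^{1/d}$ (roughly cubic boxes). Likewise the admissibility requirement $C_1(n-1)q|\la|\le\log^{-(d-1)}\vol B'$ reads $|\la|\lesssim\ell/(L\log^{d-1}v)$, which matches the proposition's hypothesis only for $L\sim v^{1/d}$ and fails badly for elongated boxes (width $\ge C$ allows $L$ up to $v/C^{d-1}$). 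The paper sidesteps this by always splitting along the longest side, so each halving sees a base $B'$ with $\vol B'\le S(\vol B_{k+1})$, keeping the geometry controlled. Second, the independent side $U$ is not actually resolved: slicing down to reference boxes produces $\sim v$ cuts and hence a catastrophic $(n-1)$-factor in the leak argument, while an ``inner induction on the volume'' with a fixed $p=1+\eps$ would multiply the constant by $p$ at each level — exactly the blowup that the $(A_k+x_k)^2$-optimization with $p_k\downarrow1$ is designed to prevent. Without that mechanism (and without the companion uniformity input Lemmas~\ref{2a9}, \ref{2a12}, which give the base case $a,\de$ independent of the small box), the recursion does not close.
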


Similarly to \cite[Sect.~2a]{I} we consider random variables
\[
S_{B,i} = \frac1{\sqrt{\vol B}} \int_B X_{t,i} \, \D t
\]
their cumulant generating functions $ \la \mapsto \log \Ex \exp \la
S_{B,i} $, and ensure shift invariance by taking the supremum over
all shifts of a box:
\begin{equation}\label{2a3}
f_{B,i}(\la) = \sup_{s\in\R^d} \log \Ex \exp \la S_{B+s,i} \, .
\end{equation}
Still, $ f_{B,i}(\la) \ge 0 $, since $ \Ex \exp \la S_{B,i} \ge \Ex(1+\la
S_{B,i}) = 1 $.

Further, we take the supremum over all $i$ and all boxes of a given
volume and width $ \ge C $:
\begin{gather*}
f_B(\la) = \sup_{i\in I} f_{B,i} (\la) \, ; \\
f_{v,C}(\la) = \sup_{\vol(B)=v,\width B\ge C} f_B (\la) \quad
\text{for } v \ge C^d \, .
\end{gather*}
All these functions map $ \R $ to $ [0,\infty] $.

Denoting for convenience
\[
R(v) = v^{\frac1d} \quad \text{and} \quad S(v) = v^{\frac{d-1}d}
\]
we rewrite (not proved yet) Prop.~\ref{2a1} as follows.

\begin{proposition}\label{2a4}
There exists $ C \in (1,\infty) $ such that
\[
f_{v,C}(\la) \le C \la^2 \quad \text{whenever } \; C |\la| \le \frac{
  \sqrt{S(v)} }{ \log^{d-1} v } \text{ and } v \ge C^d \, .
\]
\end{proposition}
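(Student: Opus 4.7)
I would prove the equivalent Proposition~\ref{2a1} (which becomes Proposition~\ref{2a4} after the $S_{B,i}$ normalization) by a hierarchical splitting argument in the spirit of \cite[Sect.~2a]{I}. Fix a box $B$ of volume $v$ and width $\ge C$; let $k^*$ realize the longest side, $r$ its midpoint, and $B^\pm$ the two halves of volume $v/2$. Using the split $(X^0_{k^*,r,i},X^-_{k^*,r,i},X^+_{k^*,r,i})$ and its leak $Y = Y_{k^*,r,\alpha_{k^*},\beta_{k^*},i}$, one has the distributional identity
\[
\int_B X_i(t)\,\D t \;\stackrel{d}{=}\; \int_{B^-}X^-(t)\,\D t + \int_{B^+}X^+(t)\,\D t - \int_{B'}Y(t)\,\D t,
\]
where $B'\subset\R^{d-1}$ is the cross-section. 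H\"older with conjugate exponents $p,q$, combined with $X^-\perp X^+$ and $X^\pm\stackrel{d}{=}X$, gives
\[
\log \Ex \exp \lambda\!\int_B X_i \;\le\; \tfrac{2}{p}\,\log \Ex \exp p\lambda\!\int_{B^-}\!X_i \;+\; \tfrac{1}{q}\,\log \Ex \exp\!\Bigl(-q\lambda\!\int_{B'}Y\Bigr),
\]
and the leak term is controlled by \eqref{1.6} (or \eqref{1.7} if $d=1$), admissible provided $C_1 q|\lambda|\le 1/\log^{d-1}\vol B'$.

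Iterate this $n$ times with level-dependent exponents $p_j = 1 + \eps_j$ (so $q_j \asymp 1/\eps_j$), halving the volume each step, down to a base scale $v_n$ of bounded volume; there the estimate $\log \Ex \exp \mu\!\int_{B_n}X_i \le K\mu^2$ (for $|\mu|$ below a constant) follows from condition~(a) of Def.~\ref{definition1}, Lemma~\ref{1a8}, and an $O(1)$-fold covering of any bounded-volume width-$\ge C$ box by translates of the box promised by~(a). Since we always split the longest side, the longest side at level $j$ is $\ge v_j^{1/d}$, so $w_j := \vol B'_j \le S(v_j) \le S(v)\cdot 2^{-j(d-1)/d}$. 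Writing $\lambda_j = \lambda \prod_{i<j}p_i$, telescoping yields
\[
\log \Ex \exp \lambda\!\int_B X_i
\;\le\; \Bigl(\prod_j \tfrac{2}{p_j}\Bigr) K\lambda_n^2 \;+\; \sum_{j<n}\Bigl(\prod_{i<j}\tfrac{2}{p_i}\Bigr)\,C_1\, w_j\, q_j\, \lambda_j^2.
\]
Elementary algebra reduces the $j$-th leak contribution to $\asymp C_1 S(v)\, 2^{j/d}\,\lambda^2/\eps_j$, and the main term to $O(v\lambda^2)$ as soon as $\sum\eps_j = O(1)$.

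These two losses are balanced by Cauchy--Schwarz: under $\sum\eps_j = O(1)$, $\sum_j 2^{j/d}/\eps_j$ is minimized at $\eps_j \propto 2^{j/(2d)}$ and equals $\asymp 2^{n/d} \asymp v^{1/d}$, producing a total leak $\asymp C_1 S(v)\, v^{1/d}\,\lambda^2 = C_1 v \lambda^2$. The admissibility $C_1 q_j|\lambda_j| \le 1/\log^{d-1}w_j$ is tightest at $j=0$, where $\eps_0 \asymp v^{-1/(2d)}$ forces $|\lambda| \lesssim 1/(v^{1/(2d)}\log^{d-1}v)$: precisely the hypothesis of Prop.~\ref{2a1}. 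The main obstacle is this joint optimization of $\{\eps_j\}$, which simultaneously produces the $v^{1/(2d)}$ factor in the admissibility threshold and keeps the leak at the Gaussian scale $v\lambda^2$; a secondary technicality is the base case, since the recursion produces boxes of arbitrary shape and bounded size, requiring a covering reduction to the single shape supplied by condition~(a).
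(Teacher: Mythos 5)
Your iteration scheme (halve the longest side, apply H\"older at each level with H\"older exponents $p_j=1+\eps_j$, control the leak via the inductive hypothesis \eqref{1.6}, and optimize $\{\eps_j\}$ geometrically so that the leak sums to $\cO(v\la^2)$ while the admissibility threshold ends up at $\cO\bigl(v^{-1/(2d)}\log^{-(d-1)}v\bigr)$) is essentially the same as the paper's Lemma~\ref{2a5} plus Proposition~\ref{2a6}, with the same Cauchy--Schwarz-type balancing of exponents. The one bookkeeping discrepancy --- you bound the recursion by ``main term $+$ leak term'' whereas the paper absorbs the leak into a running coefficient $A_k$ and must impose $a\ge C/R(\vol B_0)$ to keep $A_\infty\le\sqrt{2a}$ --- is immaterial; your version gives a different constant but the same conclusion.

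The genuine gap is the base case. Your proposal is to obtain $\log\Ex\exp\mu\int_{B_n}X_i\le K\mu^2$ for $|\mu|\le\const$, uniformly over the bounded-volume, width-$\ge C$ boxes $B_n$ that the halving process produces, by ``an $O(1)$-fold covering of $B_n$ by translates of the box promised by (a).'' This does not work as stated: an arbitrary box $B_n$ is not a disjoint union of translates of a single fixed box $B_*$, and covering it by overlapping translates gives no handle on $\int_{B_n}X_i$ --- there is no subadditivity along inclusion, and the inclusion--exclusion remainder boxes carry no moment bound from (a). Moreover, $B_n$ can have width equal to $\width B$, which is \emph{unbounded}, so the boxes arising as base cases are not confined to a compact family of shapes at all. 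The paper's actual route is quite different: Lemma~\ref{2a10} gives a \emph{two-sided} H\"older relation so that $B$ is good if and only if both halves are good; the backward direction (which you never invoke) lets one descend from the single box of condition~(a) to all of its sub-boxes, hence to all small boxes (after translation), and then the forward direction lets one ascend to arbitrary boxes. Obtaining \emph{uniform} constants over all base boxes $[0,r_1]\times\dots\times[0,r_d]$, $r_i\in[C,2C]$, is a separate step (Lemma~\ref{2a12}) done by a second induction in dimension that freezes one coordinate at a time. Both the bidirectional H\"older lemma and the auxiliary dimensional induction are missing from your plan, and without them the starting estimate feeding your recursion is not established.
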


We generalize \cite[Prop.~2a9(a)]{I}. Given a box $ B \subset \R^{d-1} $
and a number $ r>0 $, we consider two boxes $ B_1 = [0,r] \times B $
and $ B_2 = [-r,r] \times B $ in $ \R^d $. Let $ v = \vol B_1 $ and $
\width B \ge C_1 $. For $d=1$ we mean $ B_1 = [0,r] $ and $ B_2 =
[-r,r] $; $B$ disappears, as well as the condition on $ \width B $; by
convention, $ \log^{-(d-1)} \vol B = 1 $, and (in the proof) $ \vol B
= 1 $.

\begin{lemma}\label{2a5}
For all $ p \in (1,\infty) $ and $ \la $ such that
$ C_1 |\la| \le \frac{p-1}p \sqrt{2v} \log^{-(d-1)} \vol B $,
\[
f_{B_2} (\la) \le \frac2p f_{B_1} \Big( \frac{p\la}{\sqrt2} \Big) +
C_1 \frac{p}{p-1} \cdot \frac{\la^2}{2r} \, .
\]
\end{lemma}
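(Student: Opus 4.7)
The strategy is a dimensional analogue of the one-dimensional argument \cite[Prop.~2a9(a)]{I}: split $X_i$ in the first coordinate, then apply Hölder's inequality to separate the two resulting independent contributions from the leak correction.

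Concretely, I would invoke Definition~\ref{definition1}(b) with $k=1$ and $r=0$ to obtain a split $(X_i^0, X_i^-, X_i^+)$ of $X_i$, whose leak along the strip $[-r,r]\times\R^{d-1}$ is a centered random field $Y = Y_{1,0,-r,r,i}$ on $\R^{d-1}$. Since $X_i \stackrel{d}{=} X_i^0$, the defining identity of the leak gives the distributional identity
\[
\int_{B_2} X_i(t)\,\D t \;\stackrel{d}{=}\; U^- + U^+ - \int_B Y(t)\,\D t,
\]
where $U^- = \int_{[-r,0]\times B} X_i^-$ and $U^+ = \int_{[0,r]\times B} X_i^+$ are independent, each distributed as an integral of $X_i$ over a shift of $B_1$. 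Dividing by $\sqrt{\vol B_2} = \sqrt{2v}$ and applying Hölder's inequality with exponents $p$ and $q = p/(p-1)$ gives
\[
\Ex \exp(\la S_{B_2,i}) \le \Bigl(\Ex \exp\bigl(\tfrac{p\la}{\sqrt{2v}}(U^-+U^+)\bigr)\Bigr)^{1/p} \Bigl(\Ex \exp\bigl(-\tfrac{q\la}{\sqrt{2v}}\textstyle\int_B Y\bigr)\Bigr)^{1/q}.
\]

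The first factor factors across $U^-, U^+$ by independence; each piece, being an expectation of $\exp\bigl(\tfrac{p\la}{\sqrt{2}} S_{B_1+s^{\pm},i}\bigr)$, is bounded by $\exp f_{B_1}(p\la/\sqrt{2})$ via the shift-sup definition \eqref{2a3}, contributing $\exp\bigl(\tfrac{2}{p}f_{B_1}(p\la/\sqrt{2})\bigr)$. For the second factor I would apply the inductive bound \eqref{1.6} to the leak $Y$ on the box $B \subset \R^{d-1}$ with $\mu = q\la/\sqrt{2v}$. The admissibility $C_1|\mu| \le 1/\log^{d-1}\vol B$ rearranges exactly to the lemma's hypothesis on $\la$ (using $1/q = (p-1)/p$), and the bound $C_1\vol(B)\mu^2$, after raising to the $1/q$ power and using $\vol(B)/v = 1/r$, contributes $\exp\bigl(C_1\tfrac{p}{p-1}\cdot \tfrac{\la^2}{2r}\bigr)$. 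Taking logarithms and the supremum over shifts of $B_2$ (each using its own split, with the resulting leaks all in the uniformly splittable family) concludes.

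The main obstacle is purely bookkeeping: verifying that the admissibility range on $\mu$ from the inductive hypothesis aligns precisely with the stated hypothesis on $\la$, and carefully identifying the distributions of $X_i^{\pm}$ integrated over the half-strips with those of $X_i$ integrated over shifts of $B_1$ (which is what makes the $f_{B_1}$ bound applicable without stationarity). The case $d=1$ proceeds identically but uses \eqref{1.7} in place of \eqref{1.6}; the logarithmic factor disappears by the stated convention $\log^{-(d-1)}\vol B = 1$ and the width hypothesis on $B$ becomes vacuous.
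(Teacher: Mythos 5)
Your proposal is correct and follows essentially the same route as the paper: split along the first-coordinate hyperplane, write $\int_{B_2}X^0 = U^-+U^+ -\int_B Y$, apply H\"older with exponents $p$ and $p/(p-1)$, bound the independent half-strip terms by $\tfrac2p f_{B_1}(p\la/\sqrt2)$ via the shift-supremum, and control the leak term by \eqref{1.6} (or \eqref{1.7} when $d=1$), with the admissibility condition and the factor $\vol(B)/(2v)=1/(2r)$ matching exactly. Your explicit handling of shifted copies of $B_2$ (each with its own split and leak from the uniformly splittable family) is a point the paper leaves implicit, but it is the intended reading.
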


\begin{proof}
Given $ i \in I $, we use the split $ (X^0, X^-, X^+) $ of $X_i$ whose
leak $ Y = Y_{1,0,-r,r,i} $ on the strip $ [-r,r] \times \R^{d-1} $
satisfies \eqref{1.6}. Similarly to \cite[2a7 and 2a9]{I}, the random
variables $ U = \frac1{\sqrt v} \int_{[-r,0]\times B} X_t^- \, \D t $,
$ V = \frac1{\sqrt v} \int_{[0,r]\times B} X_t^+ \, \D t $,
$ W = \frac1{\sqrt{2v}} \int_{[-r,r]\times B} X_t^0 \, \D t $ and $ Z
= - \int_B Y_t \, \D t $ satisfy $ Z = \sqrt{2v} W - \sqrt v U - \sqrt
v V $ and, by \eqref{1.6}, $ \log \Ex \exp \la Z \le C_1 (\vol B) \la^2 $ for $ C_1
|\la| \le \log^{-(d-1)} \vol B $. Similarly to \cite[Prop.~2a9]{I}, by
H\"older's inequality, $ f_{B_2,i} (\la) = \log \Ex \exp
\frac{\la}{\sqrt{2v}} \int_{B_2} X_{t,i} \, \D t = \log \Ex \exp \la W
= \log \Ex \exp \la \( \tfrac{U+V}{\sqrt2} + \frac1{\sqrt{2v}} Z \)
\le \frac1p \cdot 2f_{B_1} \( \frac{p\la}{\sqrt2} \) + \frac{p-1}p
\log \Ex \exp \frac{p}{p-1} \frac{\la Z}{\sqrt{2v}} $. The second
term does not exceed $ \frac{p-1}p C_1 (\vol B) \( \frac{p}{p-1}
\frac{\la}{\sqrt{2v}} \)^2 = C_1 \frac{\vol B}{2v} \frac{p}{p-1}
\la^2 $ for $ \frac{p}{p-1} \frac{C_1|\la|}{\sqrt{2v}} \le \log^{-(d-1)}
\vol B $; and $ \frac{\vol B}{2v} = \frac1{2r} $. It remains to take
supremum in $ i \in I $.

For $d=1$ the leak $ Y = Y_{1,0,-r,r,i} $, being a random variable,
satisfies \eqref{1.7}; $ Z = -Y $; $ \log \Ex \exp \la Z \le C_1 \la^2
$ for $ C_1 |\la| \le 1 $ (and $ v=r $, of course).
\end{proof}

\begin{remark}\label{2a5a}
Above, a box is halved (divided in two boxes) by the hyperplane $
x_1=0 $. More generally, the same
holds when halving the box by another coordinate hyperplane $ x_k = c $
for $ k\in\{1,\dots,d\} $ and the appropriate $ c $.
\end{remark}

Now we consider two boxes in $ \R^d $, $ B_0 = [0,r_1] \times \dots
\times [0,r_d] $ and $ B = [0,2^{n_1}r_1] \times \dots \times
[0,2^{n_d}r_d] $ for arbitrary $ r_1,\dots,r_d \in [C,2C) $ for some $
C \ge C_1 $, and arbitrary $ n_1,\dots,n_d \in \{0,1,2,\dots\} $.

\begin{proposition}\label{2a6}
If $C$ is large enough, then for all $ r_1,\dots,r_d \in [C,2C) $, $
\de>0 $ and $ a \ge \frac{C}{R(\vol B_0)} $ satisfying
\[
f_{B_0} (\la) \le a \la^2 \quad \text{whenever } |\la| \le \de \, ,
\]
where $ B_0 = [0,r_1] \times \dots \times [0,r_d] $, there exists
natural $N$ such that the following holds for all $ n_1,\dots,n_d \in
\{0,1,2,\dots \} $ satisfying $ n_1+\dots+n_d \ge N $:
\[
f_B (\la) \le 2a \la^2 \quad \text{whenever } |\la| \le \De \, ,
\]
where $ B = [0,2^{n_1} r_1] \times \dots \times [0,2^{n_d} r_d] $ and
$ \De = \frac1{C_1} \sqrt{ \frac1a S(\vol B) } \log^{-(d-1)} S(\vol B)
$.
\end{proposition}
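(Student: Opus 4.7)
The plan is to iterate Lemma~\ref{2a5} (together with Remark~\ref{2a5a}) $n := n_1 + \dots + n_d$ times, doubling one coordinate at a time so that $B_0$ expands stepwise into $B$. Label the intermediate boxes $B^{(0)} = B_0, B^{(1)}, \dots, B^{(n)} = B$ and let $r_k$ denote the half-width in the direction doubled at step $k$, so $r_k \ge C$. At step $k$ I apply Lemma~\ref{2a5} with H\"older exponent $p_k = 1 + \eps_k$, the $\eps_k > 0$ to be tuned. A short telescoping, using the identity $(2/p_k)(p_k/\sqrt 2)^2 = p_k$, reduces the $n$-fold recursion to
\[
f_B(\la) \;\le\; \bigg(\prod_{k=0}^{n-1} p_k\bigg)\bigg(a + C_1\sum_{k=0}^{n-1}\frac{1}{2 \eps_k r_k}\bigg)\la^2,
\]
the first factor coming from iterated main terms bounded by the base hypothesis $f_{B_0}(\mu)\le a\mu^2$, and the sum from accumulated leak corrections. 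The target is $\prod p_k \le 4/3$ and $C_1\sum 1/(2\eps_k r_k)\le a/2$, together giving $(4/3)(3a/2)\la^2 = 2a\la^2$, subject to the validity conditions (i) the base-case range $|\la_0|\le\de$ at $\la_0 := \prod_{j=0}^{n-1}(p_j/\sqrt 2)\la$, and (ii) H\"older admissibility $C_1|\la_{k+1}|\le\frac{p_k-1}{p_k}\sqrt{2\vol B^{(k)}}\log^{-(d-1)}(\vol B^{(k)}/r_k)$ at each step.

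Translating (ii) into a lower bound on $\eps_k$ with $|\la|\le\De$---the $(1/\sqrt 2)^{n-k-1}$ in $|\la_{k+1}|$ cancelling the $\sqrt{2\cdot 2^k\vol B_0}$ in $\sqrt{2\vol B^{(k)}}$ to leave a $k$-independent factor $2^{n/2}\sqrt{\vol B_0}$---produces a uniform floor $\eps_{\min} = \mathrm{const}_d/((\vol B)^{1/(2d)}\sqrt a)$. The natural tuning is the hybrid $\eps_k = \max(\eps_{\min},\,c/\sqrt{r_k})$, with $c$ chosen so that the non-floor region consumes about half the budget $\log(4/3)$. A geometric-series estimate from the doubling structure gives $T := \sum_k 1/\sqrt{r_k}\le K_d/\sqrt C$ uniformly in $n$, so the non-floor correction is $O(1/C)$; the floor-region correction is $o(1)$ since $\eps_{\min}$ decays like $2^{-n/(2d)}$. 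Taking $C$ large in $d$ and $C_1$, and using $a \ge C/R(\vol B_0) \ge 1/2$ (from $\vol B_0 \le (2C)^d$), the total correction is $\le a/2$. The threshold $N$ is chosen so that for $n\ge N$ both the floor is compatible with the sum budget ($n\eps_{\min}\to 0$) and the base-case range $|\la_0|\le (4/3)2^{-n/2}\De\le\de$ holds; the latter reduces to $2^{n/(2d)}\gtrsim (\vol B_0)^{(d-1)/(2d)}/(C_1\sqrt a\,\de)$.

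The main obstacle is condition (ii): the clean choice $\eps_k = c/\sqrt{r_k}$ alone fails H\"older admissibility at the doublings where $r_k$ is largest ($\sim 2^n C$ in the single-coordinate extreme $n_\ell = n$), because meeting the floor there would force $c$ to grow exponentially in $n$ while the budget caps $c$ by a constant. Introducing the floor $\eps_{\min}$ resolves this, and the delicate estimate is that the floor contributions to both the budget and the correction are $o(1)$, which holds because $\eps_{\min}$ decays exponentially in $n$ while only linearly many terms lie in the floor region.
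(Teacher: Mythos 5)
Your proposal is essentially correct, but it is organized quite differently from the paper's proof, so let me compare. The paper works \emph{downward} from $B=B_n$ to $B_0$, always halving the \emph{longest} side, which guarantees $2r\ge R(\vol B_{k+1})$ and hence that the face volume is $\le S(\vol B_{k+1})$; this fixes the chain uniquely. At each step it optimizes the H\"older exponent exactly via $\min_{1/p+1/q=1}(A^2p+x^2q)=(A+x)^2$, reached at $p=1+x/A$, and tracks an increasing sequence $A_k=\sqrt a+\sqrt{C_1/R(\vol B_0)}\sum_{i\le k}2^{-i/(2d)}$ with $A_\infty\le\sqrt{2a}$, so the $2a$ emerges as an exact telescoping of squares. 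The ranges $\De_k$ are then defined by a backward product formula for $k<N$ and by $\De_k=M_k$ for $k\ge N$, with a log-convexity argument handling the intermediate $k$. You instead work \emph{upward} from $B_0$ to $B$ in an arbitrary coordinate-doubling order, split the target $2a$ multiplicatively as $(4/3)(3a/2)$ into a product budget $\prod p_k\le 4/3$ and an error budget $\le a/2$, and tune $\eps_k=\max(\eps_{\min},c/\sqrt{r_k})$. The floor $\eps_{\min}\sim (\vol B)^{-1/(2d)}/\sqrt a$ that you introduce to rescue H\"older admissibility plays roughly the role that the paper's choice $p_k-1=x_k/A_k\sim 2^{-(k+1)/(2d)}V_0^{-1/(2d)}/\sqrt a$ plays automatically; both decay like $2^{-k/(2d)}$ along the halving-longest chain, and both give a geometric series that is $O(1/\sqrt C)$. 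Your telescoping formula is slightly imprecise (the error at step $k$ should carry $\frac{p_k}{p_k-1}=1+\frac1{\eps_k}$, not $\frac1{\eps_k}$, and the prefactor is really $\prod_{j>k}p_j$, not the full $\prod_jp_j$), but these are cosmetic and do not affect the estimates. The main thing you lose by not fixing the halving-longest ordering is the clean bound $\vol(\text{face})\le S(\vol B_{k+1})$, which the paper uses both in the admissibility condition and in the definition of $M_k$; you compensate by observing that $\log^{d-1}(\vol B^{(k)}/r_k)/\log^{d-1}S(\vol B)$ is bounded by a dimensional constant, which is valid since $C^{d-1}\le\text{face vol}\le\vol B/C$. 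The main thing you gain is flexibility in the doubling order and a conceptually simpler ``budgeted'' argument that avoids constructing and verifying the explicit $\De_k$ sequence. Both routes are correct; the paper's exact optimization is a bit tighter and needs no floor, while yours is a robust variant.
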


(For $d=1$, by convention, $ \log^{-(d-1)} S(\dots) = 1 $,
notwithstanding that $ S(\dots)=1 $.)

\begin{proof}
We take $ n = n_1+\dots+n_d $, $ B_n = B $, halve the longest side of
$ B_n $, denote the half of $ B_n $ by $ B_{n-1} $, and repeat this
operation getting $ B_{n-2}, \dots, B_0 $. For each $ k = 0,\dots,n-1
$ we have $ \vol B_{k+1} = 2^{k+1} \vol B_0 $ and, assuming $ C \ge
C_1 $,
\begin{gather*}
\!\!\!\!\!\! f_{B_{k+1}} (\la) \le \frac2p f_{B_k} \Big( \frac{p\la}{\sqrt2} \Big) +
C_1 \frac{p}{p-1} \cdot \frac{\la^2}{R(\vol B_{k+1})}  \\
\qquad \text{for } C_1 |\la| \le \frac{p-1}p \sqrt{\vol B_{k+1}}
\log^{-(d-1)} S(\vol B_{k+1})
\end{gather*}
by Lemma~\ref{2a5} (and Remark \ref{2a5a}), since (recall $r$ and $B$ of \ref{2a5}) the
longest side $ 2r $ of $ B_{k+1} $ cannot be less than $ R(\vol
B_{k+1}) $, and $ \vol B = \frac{ \vol B_{k+1} }{ 2r } \le S(\vol
B_{k+1}) $. (For $d=1$ this is just $ 1 = \frac{2r}{2r} \le 1 $.)

Given $ A_k $ and $ \De_k $ such that $ f_{B_k}(\la) \le A_k^2 \la^2 $
for $ |\la| \le \De_k $, we denote $ q=\frac{p}{p-1} $, $ x = \sqrt{
\frac{C_1}{R(\vol B_{k+1})} } $ and get
\[
f_{B_{k+1}} (\la) \le \frac2p A_k^2 \Big( \frac{p\la}{\sqrt2} \Big)^2
+ q x^2 \la^2 = ( A_k^2 p + x^2 q ) \la^2
\]
for $ |\la| \le \min \( \frac{\sqrt2}{p} \De_k, \frac1{C_1 q}
\sqrt{\vol B_{k+1}} \log^{-(d-1)} S(\vol B_{k+1}) \) $. Generally, the
minimum of $ A_k^2 p + x^2 q $ over $p,q$ such that $ \frac1p +
\frac1q = 1 $ is equal to $ (A_k+x)^2 $, and is reached at $ p = 1 +
\frac{x}{A_k} $, $ q = 1 + \frac{A_k}x $. Thus, $ f_{B_{k+1}}(\la) \le
A_{k+1}^2 \la^2 $ for $ |\la| \le \De_{k+1} $, provided that $ A_{k+1}
\ge A_k + x = A_k + \sqrt{ \frac{C_1}{R(\vol B_{k+1})} } $ and
$ \De_{k+1} \le \min \Big( \frac{\sqrt2}{p_k} \De_k, \frac1{C_1 q_k}
\sqrt{\vol B_{k+1}} \log^{-(d-1)} S(\vol B_{k+1}) \Big) $; here $ p_k
= 1 + \frac{x}{A_k} = 1 + \frac{1}{A_k} \sqrt{ \frac{C_1}
{R(\vol B_{k+1})} } $ and $ q_k = 1 + \frac{A_k}{x} = 1 +
\frac{A_k}{\sqrt{C_1}} \sqrt{ R(\vol B_{k+1})} $.

We take $ A_k = \sqrt a + \sqrt{ \frac{C_1}{R(\vol B_0)} }
\sum_{i=1}^k 2^{-\frac{i}{2d}} $ (thus $ A_{k+1} = A_k + \sqrt{
\frac{C_1}{R(\vol B_{k+1})} } \, $) and note that $ A_k \uparrow A_\infty
\le \sqrt a + \sqrt{ \frac{C_1}{C} a } \sum_{i=1}^\infty
2^{-\frac{i}{2d}} \le \sqrt{2a} $ if $C$ is large enough (since $ a
\ge \frac{C}{R(\vol B_0)} $). Also, $ q_k \le 1 +
\frac{\sqrt{2a}}{\sqrt{C_1}} \sqrt{R(\vol B_{k+1})} \le
\frac{\sqrt{3a}}{\sqrt{C_1}} \sqrt{R(\vol B_{k+1})} $ for all $k$, if
$C$ is large enough (since $ a \ge \frac{C}{R(\vol B_0)} $
again). Assuming also $ C_1 \ge 3 $ (which is harmless) we introduce $
M_k = \frac1{C_1} \sqrt{\frac1a S(\vol B_k)} \log^{-(d-1)} S(\vol B_k)
$, note that $ M_{k+1} \le \frac1{C_1 q_k} \sqrt{\vol B_{k+1}}
\log^{-(d-1)} S(\vol B_{k+1}) $ (since $ q_k \le \sqrt{ a R(\vol
B_{k+1}) } $), and replace the condition on $ \De_{k+1} $ given
above with the stronger condition $ \De_{k+1} \le \min \(
\frac{\sqrt2}{p_k} \De_k, M_{k+1} \) $. Now we note that $ p_k - 1 =
\frac1{A_k} \sqrt{ \frac{C_1}{R(\vol
B_{k+1})} } \le \frac1{\sqrt a} \sqrt{ \frac{C_1}{R(\vol
B_{k+1})} } \le \sqrt{ \frac{C_1 R(\vol B_0)}{C R(\vol B_{k+1})} }
\le 2^{-\frac{k+1}{2d}} $ if $ C \ge C_1 $, take integer $N$ such that
$ 2^{-\frac{N+1}{2d}} \le 2^{\frac1{2d}} - 1 $, and get $ M_{k+1} \le
\frac{\sqrt2}{p_k} M_k $ for all $ k \ge N $ (since $
\frac{M_{k+1}}{M_k} \le 2^{\frac{d-1}{2d}} $ and $ p_k \le
1+2^{-\frac{N+1}{2d}} \le 2^{\frac1{2d}} $).
We choose $ \De_k $ as follows:
\begin{align*}
\De_k& = M_k &\text{for } k \ge N \, , \\
\De_k &= \frac{p_k}{\sqrt2} \frac{p_{k+1}}{\sqrt2} \dots
\frac{p_{N-1}}{\sqrt2} M_N &\text{for } k < N \, .
\end{align*}
Clearly, $ \De_{k+1} \le \frac{\sqrt2}{p_k} \De_k $ for all $k$.

In order to obtain $ f_{B_n}(\la) \le 2a\la^2 $ for $ |\la| \le M_n $
when $ n\ge N $ it is sufficient to ensure that $ \De_0 \le \de $ and
$ \De_k \le M_k $ for $ k=0,1,\dots,N-1 $. We note that $ p_0 \dots
p_{N-1} \le \prod_{k=0}^{N-1} (1+2^{-\frac{k+1}{2d}}) \le \exp
\sum_{k=0}^\infty 2^{-\frac{k+1}{2d}} $ and $ C_1 M_N \le \sqrt{ \frac1a
S(\vol B_N) } \le \sqrt{ \frac1C R(\vol B_0) S(\vol B_N) } =
2^{\frac{d-1}{2d}N} \sqrt{ \frac1C \vol B_0 } $, thus $ \De_0 =
p_0 \dots p_{N-1} \cdot 2^{-\frac N 2} M_N \le \( \exp
\sum_k 2^{-\frac{k+1}{2d}} \) \cdot 2^{-\frac{N}{2d}} \frac1{C_1}
\sqrt{ \frac1C (2C)^d } $; by increasing $N$ as needed we get $ \De_0
\le \de $.

It remains to ensure that $ \De_k \le M_k $ for $ k=0,1,\dots,N-1 $.
We'll get a bit more: $ \De_k \le \frac1{C_1} \sqrt{ \frac1a
S(\vol B_k) } \log^{-(d-1)} S(\vol B_N) $, that is,
\begin{gather*}
\frac{p_k}{\sqrt2} \frac{p_{k+1}}{\sqrt2} \dots \frac{p_{N-1}}{\sqrt2}
 \sqrt{ \frac1a S(\vol B_N) } \le \sqrt{ \frac1a S(\vol B_k) } \, ; \\
p_k \dots p_{N-1} \le 2^{\frac{N-k}2} \cdot 2^{-\frac{d-1}{2d}(N-k)} =
2^{\frac{N-k}{2d}} \, .
\end{gather*}
We may check it only for $ k=0 $ and $ k=N $ due to the fact that $
p_k \dots p_{N-1} $ is a logarithmically convex function of $k$ (since
$p_k$ decrease). For $k=N$ it is just $1\le1$. For $k=0$ we need $ p_0
\dots p_{N-1} \le 2^{\frac{N}{2d}} $, which holds for $N$ such that $ 
\exp \sum_k 2^{-\frac{k+1}{2d}} \le 2^{\frac{N}{2d}} $.
\end{proof}

\begin{remark}
In the proof of \ref{2a6}, the restriction on $C$ depends only on $d$
and $C_1$. For large $d$, roughly, $ C \ge \cO(d^2) C_1 $. Also, the
restriction on $N$ depends only on $d$, $C$ and $\de$; roughly, $ N
\ge \cO(d^2) \log C + \cO(d) \log \frac1\de $.
\end{remark}

\begin{remark}\label{2a8}
In \ref{2a6}, $a$ and $\de$ may depend on $ r_1,\dots,r_d $. Assume
for a while that they do not; that is, the given $a$ and $\de$ serve
all $ r_1,\dots,r_d $ (for the given $C$). Then the conclusion (that 
$ f_B (\la) \le 2a \la^2 $ whenever $ |\la| \le \De $) holds for all
$B$ such that $ \width B \ge C $ and $ \vol B $ is large enough
(namely, $ \vol B \ge 2^N (2C)^d $). We get, for all $v$ large enough,
\[
f_{v,C} (\la) \le 2a \la^2 \quad \text{whenever } C_1 |\la| \le \sqrt{
  \frac1a S(v) } \log^{-(d-1)} S(v) \, .
\]
\end{remark}

It appears (Lemma~\ref{2a12} below) that the assumption of \ref{2a8}
is satisfied always (that is, for every uniformly splittable
family). Alternatively, the reader may just include that assumption
into Def.~\ref{definition1} (replacing Item (a) there) and skip to the
proof of Prop.~\ref{2a4} near the end of this section.

\begin{lemma}\label{2a9}
For every uniformly splittable family $ (X_i)_{i\in I} $ of centered random
fields on $ \R^d $ and every box $ B \subset \R^d $ there exist $ a,
\de > 0 $ such that
\[
f_B(\la) \le a \la^2 \quad \text{whenever } |\la| \le \de \, .
\]
\end{lemma}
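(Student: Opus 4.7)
The plan is to prove Lemma~\ref{2a9} by induction on the dimension $d$, using condition (b) of Definition~\ref{definition1} to reduce the bound on $\int_B X_i$ to a bound on a leak living in $\R^{d-1}$ (treated by the inductive hypothesis) together with bounds on two sub-boxes of $B$ (treated by iterating). The base case $d=0$ is immediate: uniform splittability reads $\Ex\exp\eps|X_i|\le 2$, and Lemma~\ref{1a8} applied to $\eps X_i$ yields $\log\Ex\exp\lambda X_i\le\lambda^2/\eps^2$ for $|\lambda|\le\eps$.

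For the inductive step $d\ge 1$, I fix $B=[\al_1,\be_1]\times B'$ with $B'\subset\R^{d-1}$. The leaks $(Y_{k,r,a,b,i})$ form a uniformly splittable family on $\R^{d-1}$, and the inductive hypothesis applied to the specific box $B'$ yields constants $a',\de'>0$ controlling $\log\Ex\exp\mu\int_{B'+s'}Y_{k,r,a,b,i}$ uniformly in $k,r,a,b,i,s'$. Taking the split $(X^0_{1,r,i},X^-_{1,r,i},X^+_{1,r,i})$ along $\{r\}$ with $r=(\al_1+\be_1)/2$ and $(a,b)=(\al_1,\be_1)$, the leak identity together with $\int_B X_i\overset{d}{=}\int_B X^0_{1,r,i}$, H\"older's inequality (exponents $p=q=2$), and the independence of $X^-,X^+$ give
\[
\log\Ex\exp\la\!\int_B X_i\le\tfrac12\bigl[G_1(2\la)+G_2(2\la)\bigr]+\tfrac12\log\Ex\exp(-2\la)\!\!\int_{B'}\!Y_i,
\]
where $G_j(\mu)=\log\Ex\exp\mu\!\int_{B_j}X_i$ for $B_1=[\al_1,r]\times B'$ and $B_2=[r,\be_1]\times B'$. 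The leak term is $\cO(\la^2)$ by the inductive hypothesis, while $G_1,G_2$ involve sub-boxes with first dimension halved. Iterating this halving $n_1$ times (until the sub-boxes' first dimension is at most $L_1$, the first dimension of the reference box $B_0$ from condition~(a)), and then repeating in each remaining coordinate, produces sub-boxes all contained in some translate of $B_0$.

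For each such shrunken sub-box $B''\subset B_0+s$ I would write, by additivity, $\int_{B''}X_i=\int_{B_0+s}X_i-\int_{(B_0+s)\setminus B''}X_i$: condition~(a) controls the first piece directly, and the complement decomposes into finitely many boxes each strictly smaller than $B_0$ in at least one dimension, so a H\"older-plus-finite-recursion argument (terminating because only finitely many distinct shapes arise from a fixed $B$ and $B_0$) yields the remaining bound. The main obstacle will be the bookkeeping of the iterated H\"older estimates: each halving doubles $\la$ via the factor $p=2$, so after $n=n_1+\dots+n_d$ halvings one needs $|\la|\le\de\cdot 2^{-n}$, and the accumulated leak contributions scale as $\bigl(\sum_{j=0}^{n-1}4^j\bigr)\la^2$. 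Because $n$ and the depth of the terminal enclosure recursion depend only on the fixed $B$ and $B_0$, everything collapses into a single estimate $f_B(\la)\le a\la^2$ valid on $|\la|\le\de$, with $a$ and $\de$ finite (though possibly large) constants depending on $B$.
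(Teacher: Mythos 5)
Your assembly step (halving $B$ via the split and H\"older, with the leak handled by the induction hypothesis in dimension $d-1$) is sound and is essentially the upper-bound half of the paper's Lemma~\ref{2a10}. The genuine gap is in your terminal step, where the sub-boxes have become no larger than the reference box $B_0$ of Definition~\ref{definition1}(a). Writing $\int_{B''}X_i=\int_{B_0+s}X_i-\int_{(B_0+s)\setminus B''}X_i$ and invoking H\"older controls $\int_{B''}$ only in terms of the complement pieces, which are themselves proper sub-boxes of a translate of $B_0$ of exactly the same problematic kind; ``only finitely many shapes arise'' gives a cycle, not a terminating recursion. Already for $d=1$: to bound an interval of length $\ell<L$ you need an interval of length $L-\ell$, and to bound that you need one of length $\ell$ again, and each pass through H\"older rescales $\la$ by a factor $q>1$ and keeps a coefficient $\frac1q$ in front of an a priori possibly infinite cumulant generating function, so the loop never closes. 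Worse, the needed conclusion is simply false from condition~(a) plus additivity alone: if $X_t$ is the derivative of a random $L$-periodic function, every integral over a translate of $[0,L]$ vanishes while integrals over shorter intervals can be arbitrarily heavy-tailed. So controlling sub-boxes of $B_0$ must use the split (condition~(b)), which your terminal step never does.

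This is precisely what the paper's proof supplies and what your proposal is missing: the \emph{lower} bound in Lemma~\ref{2a10}. Writing $\sqrt r\,U+\sqrt s\,V=\sqrt{r+s}\,W-\tfrac1{\sqrt v}Z$ with $U,V$ independent (coming from $X^-,X^+$) and $Z$ the leak, independence turns the cumulant generating function of $\sqrt r\,U+\sqrt s\,V$ into a sum $f_{B_1}(\cdot)+f_{B_2}(\cdot)$, and since $f_{B_2}\ge0$ the unknown half can simply be dropped, yielding goodness of a sub-box from goodness of the whole box plus the leak bound --- no a priori control of the complement is needed, which is exactly how the circularity is broken. The paper then runs the argument you intend but in the opposite logical order: condition~(a) with Lemma~\ref{1a8} gives one good box; the lower bound propagates goodness downward to all sufficiently small boxes; the upper bound (your assembly step) reassembles them into an arbitrary $B$. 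To repair your proof, replace the ``enclosure recursion'' at the terminal stage by this independent-split subtraction argument (or, equivalently, prove and use the second inequality of Lemma~\ref{2a10}); the rest of your bookkeeping (geometric growth of the H\"older factors and of the accumulated leak terms, with $n$ depending only on $B$ and $B_0$) is acceptable since $a$ and $\de$ are allowed to depend on $B$.
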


This lemma will be proved by induction in the dimension $ d =
1,2,\dots $ 
As was noted near \eqref{1.6}, the given family $ X =
(X_i)_i $ on $ \R^d $ leads to another family $ Y = (Y_{k,r,a,b,i}) $ on
$ \R^{d-1} $. Both families of random fields lead to box-indexed
families of functions $ \R \to [0,\infty] $; $X$ leads to $
(f_B)_{B\subset\R^d} $ as before; likewise, $Y$ leads to $
(g_B)_{B\subset\R^{d-1}} $. (For $d=1$, just a single function $g$.)

Similarly to \cite[2a9(b) and 2a10]{I} we modify Lemma~\ref{2a5} as
follows. Given a box $ B_0 \subset \R^{d-1} $ and numbers $ r,s>0 $,
we consider three boxes $ B_1 = [-r,0] \times B_0 $, $ B_2 = [0,s]
\times B_0 $ and $ B = [-r,s] \times B_0 $ in $ \R^d $.

\begin{lemma}\label{2a10}
For all $ p \in (1,\infty) $ and $ \la \in \R $,
\begin{align*}
f_B (\la) &\le
\frac1p f_{B_1} \bigg( p\la \sqrt{\frac{r}{r+s}} \, \bigg) +
\frac1p f_{B_2} \bigg( p\la \sqrt{\frac{s}{r+s}} \, \bigg) +
\frac{p-1}{p} g_{B_0} \bigg( \frac{p}{p-1} \frac{-\la}{\sqrt{r+s}}
\bigg) \, ; \\
f_B (\la) &\ge
p f_{B_1} \bigg( \frac{\la}{p} \sqrt{\frac{r}{r+s}} \, \bigg) +
p f_{B_2} \bigg( \frac{\la}{p} \sqrt{\frac{s}{r+s}} \, \bigg) -
(p-1) g_{B_0} \bigg( \frac{1}{p-1} \frac{\la}{\sqrt{r+s}}
\bigg) \, .
\end{align*}
\end{lemma}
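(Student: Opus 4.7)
The plan is to use the split identity defining the leak, together with the crucial independence of $X^-$ and $X^+$, via H\"older's inequality for the upper bound and its reverse for the lower bound.

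\textbf{Setup.} For fixed $i \in I$, pick a split $(X^0, X^-, X^+)$ of $X_i$ with leak $Y = Y_{1,0,-r,s,i}$ on $[-r,s] \times \R^{d-1}$. Writing $S_B^\bullet = (\vol B)^{-1/2}\int_B \bullet\,\D t$ and $T_{B_0}^Y = (\vol B_0)^{-1/2}\int_{B_0} Y\,\D t$, and using $\vol B = (r+s)\vol B_0$, $\vol B_1 = r\vol B_0$, $\vol B_2 = s\vol B_0$, the defining identity
\[
\int_B X^0(t)\,\D t = \int_{B_1} X^-(t)\,\D t + \int_{B_2} X^+(t)\,\D t - \int_{B_0} Y(t)\,\D t
\]
becomes, after division by $\sqrt{\vol B}$,
\[
S_B^{X^0} = \sqrt{\tfrac{r}{r+s}}\,S_{B_1}^{X^-} + \sqrt{\tfrac{s}{r+s}}\,S_{B_2}^{X^+} - \tfrac{1}{\sqrt{r+s}}\,T_{B_0}^Y.
\]
Since $S_{B_1}^{X^-}$ depends only on $X^-$ and $S_{B_2}^{X^+}$ only on $X^+$, these two summands are independent, while $T_{B_0}^Y$ may be correlated with both.

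\textbf{Upper bound.} Since $X_i$ and $X^0$ have the same distribution, $\Ex\exp\la S_{B,i} = \Ex\exp\la S_B^{X^0}$. View the integrand as a product of $\exp[\la(\sqrt{r/(r+s)}\,S_{B_1}^{X^-}+\sqrt{s/(r+s)}\,S_{B_2}^{X^+})]$ and $\exp[-\tfrac{\la}{\sqrt{r+s}}T_{B_0}^Y]$, and apply H\"older with conjugate exponents $p$ and $q = p/(p-1)$. Independence of $X^\pm$ splits the first factor (raised to the $p$th power) into a product of two separate expectations; after taking $\log$ and identifying $X^\pm$ with $X_i$ and $Y$ with $Y_{1,0,-r,s,i}$ by identical distributions, each of the three resulting $\log\Ex\exp$-terms is bounded by the corresponding $f_{B_j,i}$ (hence $f_{B_j}$) or $g_{B_0}$. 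The bound is shift- and index-independent on the right, so the supremum on the left passes directly to $f_B$.

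\textbf{Lower bound.} Replace H\"older by its reverse, valid for positive $u,v$ with $\alpha = 1/p \in (0,1)$ and $\beta = -1/(p-1) < 0$ satisfying $\alpha^{-1}+\beta^{-1}=1$:
\[
\Ex[uv] \ge (\Ex u^\alpha)^{1/\alpha}(\Ex v^\beta)^{1/\beta}.
\]
Applied to the same two-factor decomposition, independence of $X^\pm$ again splits $(\Ex u^{1/p})^p$ into a product, contributing $p\log\Ex\exp[\tfrac{\la}{p}\sqrt{r/(r+s)}\,S_{B_1}^{X^-}] + p\log\Ex\exp[\tfrac{\la}{p}\sqrt{s/(r+s)}\,S_{B_2}^{X^+}]$ after taking $\log$; the factor $(\Ex v^\beta)^{1/\beta}$ contributes $-(p-1)\log\Ex\exp[\tfrac{1}{p-1}\tfrac{\la}{\sqrt{r+s}}T_{B_0}^Y]$, since $1/\beta = -(p-1)$. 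Identifying distributions yields the claimed lower bound; the sign $-(p-1)$ in front of $g_{B_0}$ arises naturally from $\beta<0$.

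\textbf{Main obstacle.} The analytic heart---H\"older plus reverse H\"older, tied together by the split identity and the independence of $X^\pm$---is clean. The subtle point is the supremum bookkeeping for the lower bound, where the shift and index approximately realizing $f_B$ on the left must be compatible with those realizing the three sups on the right; this coherence is provided by the translation-consistent construction of the splits implicit in Definition~\ref{definition1}.
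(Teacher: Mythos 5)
Your argument is essentially the paper's own proof: the same split identity $ S_B^{X^0} = \sqrt{\tfrac{r}{r+s}}\,S_{B_1}^{X^-} + \sqrt{\tfrac{s}{r+s}}\,S_{B_2}^{X^+} - \tfrac1{\sqrt{r+s}}\,T_{B_0}^Y $, the same use of the independence of $X^-,X^+$ to factor the first block, and H\"older with exponents $p$, $p/(p-1)$ for the upper bound. For the lower bound the paper does not invoke a reverse H\"older inequality; it applies ordinary H\"older to the rearranged relation $ \sqrt r\,U + \sqrt s\,V = \sqrt{r+s}\,W - \tfrac1{\sqrt v}Z $, i.e.\ it proves the lower bound in the equivalent rewritten form $ f_{B_1}(\cdot)+f_{B_2}(\cdot) \le \tfrac1p f_B(p\la) + \tfrac{p-1}p g_{B_0}(\cdot) $. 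Your reverse H\"older with exponents $1/p$ and $-1/(p-1)$ is exactly that inequality in different packaging (and is itself proved by that rearrangement), so this is not a genuinely different route.

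One caution about your closing paragraph: the appeal to a ``translation-consistent construction of the splits implicit in Definition~\ref{definition1}'' is not a real mechanism --- the definition only supplies a split for each cutting hyperplane. What your computation (and the paper's) actually yields is the bound for each fixed index $i$ and each \emph{common} translate of the adjacent pair $B_1+t$, $B_2+t$, using the split along their common face; the $g_{B_0}$ term, carrying a minus sign, may be replaced by its supremum for free, and the left-hand term is $\le f_B(\la)$ for every $(i,t)$, so no ``near-maximizer of $f_B$'' is needed on the left. The only delicate point is passing from the two per-$(i,t)$ quantities to the two separate suprema $f_{B_1}$, $f_{B_2}$ on the right; this step is exactly as loose in the paper's one-line proof as in yours, and in the paper's sole application of the lower bound (Lemma~\ref{2a12}) one of the two nonnegative $f$-terms is dropped, after which the resulting one-term inequality does follow cleanly by taking the supremum over $(i,t)$ at the end.
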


\begin{proof}
Similar to the proof of \ref{2a5}. Denoting $ v = \vol B_0 $,
the random variables
$ U = \frac1{\sqrt{rv}} \int_{B_1} X_t^- \, \D t $,
$ V = \frac1{\sqrt{sv}} \int_{B_2} X_t^+ \, \D t $,
$ W = \frac1{\sqrt{(r+s)v}} \int_B X_t^0 \, \D t $ and $ Z
= - \int_{B_0} Y_t \, \D t $ satisfy $ \sqrt{r+s} W = \sqrt r U +
\sqrt s V + \frac1{\sqrt v} Z $. By H\"older's inequality, $ f_{B,i}
(\la) = \log \Ex \exp \la W = \log \Ex \exp \la \(
\sqrt{\frac{r}{r+s}} U + \sqrt{\frac{s}{r+s}} V +
\frac1{\sqrt{(r+s)v}} Z \) \le \frac1p \log \Ex \exp p \la \(
\sqrt{\frac{r}{r+s}} U + \sqrt{\frac{s}{r+s}} V \) + \frac{p-1}p \log
\Ex \exp \frac{p}{p-1} \la \frac1{\sqrt{r+s}} \frac1{\sqrt v} Z \le
\linebreak
\frac1p f_{B_1} \( p\la \sqrt{\frac{r}{r+s}} \, \) + \frac1p f_{B_2}
\( p\la \sqrt{\frac{s}{r+s}} \, \) + \frac{p-1}{p} g_{B_0} \( -
\frac{p}{p-1} \frac{\la}{\sqrt{r+s}} \) $; supremum in $ i $ gives the
first inequality (the upper bound). The second inequality (the lower
bound), being rewritten as $ f_{B_1} \( \la \sqrt{\frac{r}{r+s}} \, \)
+ f_{B_2} \( \la \sqrt{\frac{s}{r+s}} \, \) \le \frac1p f_B(p\la) +\linebreak
\frac{p-1}{p} g_{B_0} \( \frac{p}{p-1} \frac{\la}{\sqrt{r+s}} \) $,
follows by H\"older's inequality from the relation $ \sqrt r U + \sqrt
s V = \sqrt{r+s} W - \frac1{\sqrt v} Z $.
\end{proof}

\begin{remark}\label{2.11}
Above, a box $B$ is divided in two boxes $B_1,B_2$ by the hyperplane $
x_1=0 $. More generally, the same holds when $B$ is divided by another
coordinate hyperplane $ x_k = c $ for $ k\in\{1,\dots,d\} $ and any
appropriate $ c $.
\end{remark}

Let us call a box $ B \subset \R^d $ \emph{good} when there exist $ a,
\de > 0 $ such that $ f_B(\la) \le a \la^2 $ whenever $ |\la| \le \de
$. Similarly, a box $ B \subset \R^{d-1} $ is good when $ \exists
a,\de>0 \; \forall\la \; \( |\la|\le\de \impl g_B(\la) \le a \la^2 \)
$.

Existence of (at least one) good box follows from Item (a) of
Def.~\ref{definition1} and Lemma \ref{1a8}. (For $d=0$ the only
``box'' is good.)

In order to prove Lemma~\ref{2a9} we assume (the induction hypothesis)
that all boxes in $ \R^{d-1} $ are good, and prove that all boxes in $
\R^d $ are good.

By \ref{2a10}, $B$ is good if and only if $ B_1, B_2 $ are good. It
follows that every box contained in some good box is good (turn from
$B$ to $B_1$ or $B_2$, and iterate).

Thus, all boxes that are small enough are good. It follows that every
box is good (divide it into small boxes).

Given a set of boxes, we say that these boxes are \emph{uniformly
good} when there exist $ a,\de > 0 $ such that for every box $B$ of
the given set, every $ \la \in [-\de,\de] $ satisfies the inequality $
f_B(\la) \le a \la^2 $ (or $ g_B(\la) \le a \la^2 $, for $ B \subset
\R^{d-1} $).

\begin{lemma}\label{2a12}
Let $ 0 < c < C < \infty $. Then the boxes $ [0,r_1] \times \dots
\times [0,r_d] $ for all $ r_1,\dots,r_d \in [c,C] $ are uniformly
good.
\end{lemma}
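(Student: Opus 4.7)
I would argue by induction on the dimension $d$. For $d=0$ there are no parameters, and the single ``box'' $ \R^0 $ is trivially uniformly good: Def.~\ref{definition1} gives $ \Ex \exp \eps |X_i| \le 2 $ uniformly in $i$, whence $ f(\la) \le \la^2 $ for $ |\la| \le \eps $ by Lemma~\ref{1a8}. Suppose now $ d \ge 1 $ and that Lemma~\ref{2a12} is already established in dimension $ d-1 $. For each fixed $ k \in \{1,\dots,d\} $ the leak family $ (Y_{k,r,a,b,i})_{r,a,b,i} $ is itself a uniformly splittable family of centered random fields on $ \R^{d-1} $ (Def.~\ref{definition1}(b)); applying the induction hypothesis to it yields $ a_0^{(k)}, \de_0^{(k)} > 0 $ with $ g_{B_0}(\nu) \le a_0^{(k)} \nu^2 $ for $ |\nu| \le \de_0^{(k)} $ whenever $ B_0 \subset \R^{d-1} $ has all sides in $ [c, C] $. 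Taking $ a_0 = \max_k a_0^{(k)} $ and $ \de_0 = \min_k \de_0^{(k)} $, a single pair $ (a_0, \de_0) $ uniformly controls the $g$-function of every boundary box that can appear below, in every coordinate direction.

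Next, take the reference box $ B_* = [0, C]^d $. By Lemma~\ref{2a9}, $ B_* $ is good: there exist $ a_*, \de_* > 0 $ with $ f_{B_*}(\la) \le a_* \la^2 $ for $ |\la| \le \de_* $. I then ``peel'' $ B_* $ down to the target $ B' = [0, r_1] \times \dots \times [0, r_d] $ one coordinate at a time, through the chain $ B^{(0)} = B_* $ and $ B^{(k)} = [0, r_1] \times \dots \times [0, r_k] \times [0, C]^{d-k} $ for $ k = 1, \dots, d $, so $ B^{(d)} = B' $. At step $k$, split $ B^{(k-1)} $ by the hyperplane $ x_k = r_k $; the associated $(d-1)$-dimensional boundary box is $ [0, r_1] \times \dots \times [0, r_{k-1}] \times [0, C]^{d-k} $, whose sides all lie in $ [c, C] $, so its $g$-function is controlled by $ (a_0, \de_0) $. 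Applying the lower bound of Lemma~\ref{2a10} (with Remark~\ref{2.11} to split along the $k$-th coordinate), dropping the nonnegative term $ f_{B_2} $, and substituting the quadratic bounds with a fixed choice $ p = 2 $, one obtains $ f_{B^{(k)}}(\mu) \le a_k \mu^2 $ for $ |\mu| \le \de_k $, where $ a_k, \de_k $ depend only on $ a_{k-1}, \de_{k-1}, a_0, \de_0, c, C $ (roughly, $ a_k \le (2C/c)\, a_{k-1} + 2 a_0/c $ and $ \de_k \ge \sqrt{c/C}\, \min(\de_{k-1}/2,\, \de_0 \sqrt{C}/2) $). After $d$ such steps we obtain $ a_d, \de_d $ depending only on $ a_*, \de_*, a_0, \de_0, c, C $, which is the required uniform goodness.

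The main obstacle is arithmetic bookkeeping: making sure the admissible range for $\mu$ does not collapse at any step and that the coefficient $ a_k $ stays bounded. The change of variable $ \mu = \la \sqrt{r_k / L} $ (where $ L \le C $ is the ambient side length in direction $k$) combined with the two constraints $ |p\la| \le \de_{k-1} $ and $ |p \la / ((p-1) \sqrt L)| \le \de_0 $ leaves $ |\mu| \le \de_k $ with $ \de_k > 0 $ independent of the specific $r_j$, because $ r_k \ge c $ and $p$ is a fixed number; and since $d$ is fixed, the geometric growth of $ a_k $ and geometric decay of $ \de_k $ over only $d$ steps still yield finite positive limiting constants, completing the argument.
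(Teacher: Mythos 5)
Your proposal is correct and follows essentially the same route as the paper: induction on the dimension, with the induction hypothesis (applied to the leak family's $g$-functions) giving uniform constants $a_0,\de_0$ for all $(d-1)$-dimensional boxes with sides in $[c,C]$, the reference box $[0,C]^d$ handled by Lemma~\ref{2a9}, and then peeling one coordinate at a time via the lower bound of Lemma~\ref{2a10} (with Remark~\ref{2.11}) at $p=2$, dropping the nonnegative $f_{B_2}$ term — indeed your constants $a_k \le (2C/c)a_{k-1}+2a_0/c$ and the admissible $\la$-range match the paper's computation. The only nitpick is the $d=0$ base case: Lemma~\ref{1a8} applied to $\eps X_i$ gives $f(\la)\le\la^2/\eps^2$ for $|\la|\le\eps$, not $f(\la)\le\la^2$, which is immaterial for uniform goodness.
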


\begin{proof}
Induction in the dimension $d$. The induction hypothesis gives $ a_0,
\de_0 > 0 $ such that $ g_{[0,r_1] \times \dots \times [0,r_{d-1}] }
(\la) \le a_0 \la^2 $ whenever $ |\la| \le \de_0 $ and $
r_1,\dots,r_{d-1} \in [c,C] $.
(When $d=1$, this holds for the single function $g$.)
The box $ [0,C]^d = [0,C] \times \dots
\times [0,C] $ being good by \ref{2a9}, we take $ a,\de > 0 $ such
that $ f_{[0,C]^d} (\la) \le a \la^2 $ whenever $ |\la| \le \de $. We
use the second inequality of \ref{2a10} for $ p=2 $ (taking \ref{2.11}
into account):
\begin{gather*}
2f_{[0,r_1]\times[0,C]^{d-1}} \Big( \frac\la2 \sqrt{\frac{r_1}C} \Big)
  \le f_{[0,C]^d} (\la) + g_{[0,C]^{d-1}} \Big( \frac\la{\sqrt C}
  \Big) \, ; \\
\!\!\!\!\!\!\!\!\!\!\!\!
  f_{[0,r_1]\times[0,C]^{d-1}} (\la) \le \frac12 f_{[0,C]^d} \Big(
  2\la \sqrt{\frac{C}{r_1}} \Big) + \frac12 g_{[0,C]^{d-1}} \Big(
  \frac{2\la}{\sqrt{r_1}} \Big) \le \\
\qquad\qquad\qquad \le \frac12 a \cdot 4\la^2 \frac{C}{r_1} + \frac12 a_0 \cdot
\frac{4\la^2}{r_1} \le \Big( \frac{2aC}c + \frac{2a_0}c \Big) \la^2
\end{gather*}
for $ |\la| \le \min \( \frac\de2 \sqrt{\frac{c}{C}}, \frac{\de_0}2
\sqrt c \) $. Thus, the boxes $ [0,r_1]\times[0,C]^{d-1} $ for $ r_1
\in [c,C] $ are uniformly good. Now we divide the box $
[0,r_1]\times[0,C]^{d-1} $ by the hyperplane $ x_2 = r_2 $, apply
again the argument used above, and see that the boxes $
[0,r_1]\times[0,r_2]\times[0,C]^{d-2} $ for $ r_1,r_2 \in [c,C] $ are
uniformly good. And so on.
\end{proof}

\begin{proof}[Proof of Prop.~\ref{2a4}]
We take $C$ large enough according to \ref{2a6}. By \ref{2a12} the
boxes $ B = [0,r_1] \times \dots \times [0,r_d] $ for all $
r_1,\dots,r_d \in [C,2C] $ are uniformly good. We take $ a \ge 1 $ and $\de>0 $
such that $ f_B(\la) \le a\la^2 $ for all these $B$ and all $ \la \in
[-\de,\de] $. Now \ref{2a8} gives $V$ such that all $ v \in [V,\infty)
$ satisfy
\[
f_{v,C} (\la) \le 2a \la^2 \quad \text{whenever } C_1 |\la| \le \sqrt{
  \frac1a S(v) } \log^{-(d-1)} S(v) \, .
\]
We take $ M>1 $ such that $ M \ge C $, $ M^d \ge V $, $ M \ge 2a $, $
M \ge C_1 \sqrt a $, and get
\[
f_{v,M}(\la) \le M \la^2 \quad \text{whenever } \; M |\la| \le \frac{
  \sqrt{S(v)} }{ \log^{d-1} v } \text{ and } v \ge M^d \, ,
\]
since $ v>1 $, $ \log^{d-1} S(v) \le \log^{d-1} v $ (just $ 1\le1 $
for $d=1$), $ v \ge M^d \ge V $, $ C_1 |\la| \le \frac{C_1}M \sqrt{S(v)}
\log^{-(d-1)} v \le \sqrt{ \frac1a S(v) } \log^{-(d-1)} S(v) $, and $
f_{v,M}(\la) \le f_{v,C}(\la) \le 2a \la^2 \le M \la^2 $.
\end{proof}

Proposition \ref{2a1} is thus proved.

\section[Close to large deviations]
  {\raggedright Close to large deviations}
\label{sect3}
In this section we denote by $C_2$ the constant given by
Prop.~\ref{2a4}, use the functions $ f_{v,C} $ mostly for $C=C_2$,
and denote $ f_v = f_{v,C_2} $. By \ref{2a4},
\begin{equation}\label{3.1}
f_v(\la) \le C_2 \la^2 \quad \text{whenever } \; C_2 |\la| \le \frac{
  \sqrt{S(v)} }{ \log^{d-1} v } \text{ and } v \ge C_2^d \, .
\end{equation}

We still use \eqref{1.6}, \eqref{1.7} and $C_1$ therefrom; $ C_1 \le
C_2 $. By convention, $ \log^0 x = 1 $ always (also if $x$ does not
belong to $(1,\infty) $ or even is ill-defined).

\begin{lemma}
For all $ p \in (1,\infty) $,
\[
f_{2v} (\la) \le \frac2p f_v \Big( \frac{p\la}{\sqrt2} \Big) +
C_1 \frac{p}{p-1} \cdot \frac{\la^2}{R(2v)}
\]
whenever $ C_1 |\la| \le \frac{p-1}p \sqrt{2v} \log^{-(d-1)} S(2v) $
and $ 2v \ge (2C_2)^d $. 
\end{lemma}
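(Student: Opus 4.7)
The statement is the natural ``$f_v$-version'' of Lemma~\ref{2a5}, obtained by always halving a box along its longest side. I fix a box $B'$ of volume $2v$ and width $\ge C_2$, then a shift $s$ and an index $i$; taking suprema at the end will give the $f_{2v}$ bound. Let the longest coordinate side of $B'$ have length $L \ge R(2v) = (2v)^{1/d}$. Halve $B'$ by the hyperplane bisecting this side: this produces two congruent halves, each of volume $v$, with the cross-section $B_{\rm cr}$ (the box ``$B$'' of Lemma~\ref{2a5}) satisfying $\vol B_{\rm cr} = v/(L/2) \le 2v/R(2v) = S(2v)$. The hypothesis $2v \ge (2C_2)^d$ forces $R(2v) \ge 2C_2$, so $L/2 \ge C_2 \ge C_1$; combined with $\width B' \ge C_2$ this gives width $\ge C_2$ for both halves, and also $\width B_{\rm cr} \ge C_2 \ge C_1$ as required by \ref{2a5}.

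Now I apply Lemma~\ref{2a5}, invoking Remark~\ref{2a5a} to allow the halving to be along the relevant coordinate direction rather than $x_1 = 0$. Its conclusion is
\[
f_{B'}(\la) \le \frac{2}{p} f_{B_1}\Big(\frac{p\la}{\sqrt{2}}\Big) + C_1 \frac{p}{p-1} \cdot \frac{\la^2}{L},
\]
valid under the Lemma~\ref{2a5} condition $C_1|\la| \le \frac{p-1}{p}\sqrt{2v}\log^{-(d-1)}\vol B_{\rm cr}$. I check that our hypothesis $C_1|\la| \le \frac{p-1}{p}\sqrt{2v}\log^{-(d-1)}S(2v)$ is stronger: $\vol B_{\rm cr} \le S(2v)$, and $\vol B_{\rm cr} \ge C_2^{d-1} \ge 1$ so $\log \vol B_{\rm cr} \in [0,\log S(2v)]$, giving $\log^{-(d-1)} \vol B_{\rm cr} \ge \log^{-(d-1)} S(2v)$ (with the convention $\log^{0} = 1$ taking care of $d=1$, and any $\log = 0$ case being vacuous since the bound on $|\la|$ is then $+\infty$). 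From $\vol B_1 = v$ and $\width B_1 \ge C_2$ we get $f_{B_1} \le f_v$, and from $L \ge R(2v)$ we get $1/L \le 1/R(2v)$. Taking the supremum over $B'$, $s$, and $i$ yields the claimed inequality.

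\textbf{Main obstacle.} There is no real conceptual difficulty beyond Lemma~\ref{2a5} itself; the work is just the bookkeeping needed to translate between the ``fixed-box'' formulation of \ref{2a5} and the ``sup-over-boxes-of-fixed-volume'' formulation used here. The one step that deserves a line of care is verifying that halving the longest side respects the width bound $\ge C_2$ and that the admissibility range of $\la$ only shrinks, which is exactly where the hypotheses $2v \ge (2C_2)^d$ and $\vol B_{\rm cr} \le S(2v)$ are used.
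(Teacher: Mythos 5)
Your proposal is correct and takes essentially the same approach as the paper: fix a box of volume $2v$ and width $\ge C_2$, halve it along its longest side (length $\ge R(2v) \ge 2C_2$, so the halves still have width $\ge C_2$ and the cross-section has volume $\le S(2v)$ and width $\ge C_1$), apply Lemma~\ref{2a5} via Remark~\ref{2a5a}, then pass to suprema. The only cosmetic difference is that you separately fix a shift and index and mention a final supremum over them, whereas these suprema are already built into $f_{B'}$; this is harmless redundancy.
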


\begin{proof}
Given a box $ B $ such that $ \vol B = 2v $ and $ \width B \ge C_2 $,
we halve the longest side $2r$ of $B$ and apply Lemma~\ref{2a5} (as we
did in the proof of \ref{2a6}). Once again, $ 2r \ge R(\vol B) =
R(2v) \ge 2C_2 $, thus a half of $B$ is still of width $ \ge C_2 $,
and $ \frac{2v}{2r} \le S(2v) $, thus \ref{2a5} applies and gives $
f_B(\la) \le \frac2p f_v \( \frac{p\la}{\sqrt2} \) +
C_1 \frac{p}{p-1} \cdot \frac{\la^2}{2r} $.
\end{proof}

It follows that
\[
f_{2v} (\la) \le \frac2p f_v \Big( \frac{p\la}{\sqrt2} \Big) +
|\la| \frac{\sqrt{2v}}{R(2v)} \log^{-(d-1)} S(2v) \, ,
\]
since $ \frac{p}{p-1} C_1 |\la| \le \sqrt{2v} \log^{-(d-1)} S(2v) $.

For convenience we denote
\[
\phi_v(\la) = \frac1{ |\la| \sqrt v } f_v(\la) \quad \text{for } v \ge
C_2^d \text{ and } \la \ne 0 \, .
\]

\begin{corollary}\label{3.2}
For all $ p \in (1,\infty) $,
\[
\phi_{2v} (\la) \le \phi_v \Big( \frac{p\la}{\sqrt2} \Big) +
\frac1{ R(2v) \log^{d-1} S(2v) }
\]
whenever $ 0 < C_1 |\la| \le \frac{p-1}p \sqrt{2v} \log^{-(d-1)} S(2v)
$ and $ 2v \ge (2C_2)^d $. 
\end{corollary}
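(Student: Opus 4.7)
The plan is to obtain the inequality by simply renormalizing the displayed inequality stated immediately before the corollary. Under the given hypotheses on $\la$ and $v$ (inherited verbatim from the preceding lemma), that inequality reads
\[
f_{2v}(\la) \le \frac{2}{p} f_v\Big(\frac{p\la}{\sqrt2}\Big) + |\la|\, \frac{\sqrt{2v}}{R(2v)} \log^{-(d-1)} S(2v).
\]
Since $0<C_1|\la|$, in particular $|\la|>0$, so I may divide both sides by $|\la|\sqrt{2v}$. By the definition of $\phi$, the left-hand side becomes $\phi_{2v}(\la)$, and the second term on the right becomes $1/\bigl(R(2v)\log^{d-1} S(2v)\bigr)$. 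For the first term, using $\bigl|p\la/\sqrt2\bigr| = p|\la|/\sqrt2$,
\[
\frac{2}{p\,|\la|\sqrt{2v}}\, f_v\Big(\frac{p\la}{\sqrt2}\Big)
= \frac{\sqrt2}{p\,|\la|\sqrt{v}}\, f_v\Big(\frac{p\la}{\sqrt2}\Big)
= \phi_v\Big(\frac{p\la}{\sqrt2}\Big),
\]
which yields exactly the stated bound. The hypothesis $2v \ge (2C_2)^d$ ensures $v \ge C_2^d$, so that both $\phi_{2v}(\la)$ and $\phi_v(p\la/\sqrt2)$ are well defined.

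There is no genuine obstacle beyond checking the normalization; the corollary is a bookkeeping step that repackages the quadratic-plus-linear bound on $f_{2v}$ into a clean additive bound on the scaled quantity $\phi_v$. Presumably this reformulation is what will make the doubling iteration $v \mapsto 2v$ tractable in the rest of the section: a recurrence of the form $\phi_{2v}(\la) \le \phi_v(p\la/\sqrt2) + \eps_v$ iterates transparently by summing a geometric-like series in $\eps_v$ while tracking the geometric rescaling of $\la$, whereas iterating the mixed bound on $f_{2v}$ directly would require delicate accounting analogous to that of Proposition~\ref{2a6}.
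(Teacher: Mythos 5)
Your proof is correct and matches the paper's: both simply take the displayed bound $f_{2v}(\la) \le \frac{2}{p} f_v\big(\frac{p\la}{\sqrt2}\big) + |\la|\,\frac{\sqrt{2v}}{R(2v)}\log^{-(d-1)} S(2v)$ and normalize by $|\la|\sqrt{2v}$ (the paper phrases it as bounding the difference $\phi_{2v}(\la)-\phi_v(p\la/\sqrt2)$, which is the same computation). There is nothing more to it, and your observation that $\la\ne 0$ and $2v\ge(2C_2)^d$ are exactly what make the $\phi$'s well defined is the right sanity check.
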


\begin{proof}
$ \phi_{2v}(\la) - \phi_v \( \frac{p\la}{\sqrt2} \) = \frac1{
|\la| \sqrt{2v} } f_{2v}(\la) - \frac{ \sqrt2 }{ p |\la| \sqrt v }
f_v \( \frac{p\la}{\sqrt2} \) \le \frac1{ |\la| \sqrt{2v} } \( \frac2p
f_v \( \frac{p\la}{\sqrt2} \) + |\la| \frac{\sqrt{2v}}{R(2v)}
\log^{-(d-1)} S(2v) \) - \frac{ \sqrt2 }{ p |\la| \sqrt v }
f_v \( \frac{p\la}{\sqrt2} \) = \frac1{ R(2v) \log^{d-1} S(2v) }
$.
\end{proof}

\begin{corollary}\label{3.3}
\[
\phi_{2v} (\la_1) \le \phi_v (\la_0) + \frac1{ R(2v) \log^{d-1}
S(2v) }
\]
whenever $ 2v \ge (2C_2)^d $, $ \la_0 \la_1 > 0 $ and
\[
\frac{ \sqrt2 }{ |\la_1| } - \frac1{ |\la_0| } \ge \frac{ C_1 }{ \sqrt
v } \log^{d-1} S(2v) \, .
\]
\end{corollary}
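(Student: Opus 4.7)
The plan is to deduce this from Corollary~\ref{3.2} by choosing the free parameter $p$ so that the input argument $p\la/\sqrt2$ of $\phi_v$ matches $\la_0$. Specifically, I would apply \ref{3.2} with $\la:=\la_1$ and
\[
p := \frac{\sqrt2\,|\la_0|}{|\la_1|}.
\]
Because $\la_0\la_1>0$ the signs agree, so $p\la_1/\sqrt2=\la_0$, and the right-hand side of \ref{3.2} becomes exactly the right-hand side claimed in \ref{3.3}.

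What then remains is to verify the two side conditions of \ref{3.2}: that $p>1$ and that $C_1|\la_1|\le\frac{p-1}{p}\sqrt{2v}\,\log^{-(d-1)}S(2v)$. The first is immediate from the hypothesis, since the right-hand side $\frac{C_1}{\sqrt v}\log^{d-1}S(2v)$ is strictly positive (using $2v\ge(2C_2)^d>1$ so $\log S(2v)>0$ for $d\ge2$, and the convention $\log^0=1$ for $d=1$), forcing $\sqrt2\,|\la_0|>|\la_1|$. For the second, I would substitute $\frac{p-1}{p}=1-\frac{|\la_1|}{\sqrt2\,|\la_0|}$, then divide the inequality by $|\la_1|/\sqrt2$ and use $\sqrt2/\sqrt{2v}=1/\sqrt v$; the condition unfolds into
\[
\frac{1}{|\la_0|}+\frac{C_1}{\sqrt v}\log^{d-1}S(2v)\le\frac{\sqrt2}{|\la_1|},
\]
which is precisely the hypothesis of \ref{3.3}. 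One invocation of \ref{3.2} then delivers the claim.

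There is no real obstacle: once the correct value of $p$ is identified the whole argument is one line of algebra. The only place needing a bit of care is tracking how the $\sqrt{2v}$ appearing in the side condition of \ref{3.2} becomes the $\sqrt v$ in the hypothesis of \ref{3.3}; the missing $\sqrt2$ is precisely the factor built into the $p\la/\sqrt2$ normalization in \ref{3.2}, and keeping it in the right place is all that the verification really amounts to.
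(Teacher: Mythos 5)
Your proposal is correct and is essentially identical to the paper's own proof: the paper also sets $p=\sqrt2\,\la_0/\la_1$ (so $p\la_1/\sqrt2=\la_0$) and checks that the hypothesis of \ref{3.3} is exactly the restated side condition $C_1|\la_1|\le\frac{p-1}{p}\sqrt{2v}\,\log^{-(d-1)}S(2v)$ of \ref{3.2}. Your additional remark that $p>1$ is forced by strict positivity of the right-hand side of the hypothesis is a correct (and implicit in the paper) detail.
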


\begin{proof}
We take $ p = \frac{ \sqrt2 \la_0 }{ \la_1 } $ and note that $ \la_0
= \frac{ p \la_1 }{ \sqrt2 } $ and $ \frac{p-1}{p|\la_1|}
= \frac1{|\la_1|} - \frac1{\sqrt2 |\la_0|} \ge
\frac{C_1}{\sqrt{2v}} \log^{d-1} S(2v) $, that is, $ C_1
|\la_1| \le \frac{p-1}p \sqrt{2v} \log^{-(d-1)} S(2v) $,
thus \ref{3.2} applies.
\end{proof}

\pagebreak[2]

\begin{corollary}\label{3.5}
Let numbers $ v_0,\dots,v_n $ and $ \la_0,\dots,\la_n $ satisfy
\[
v_{k+1} = 2v_k \, , \quad \la_k \la_{k+1} > 0 \, , \quad 
\frac{ \sqrt2 }{ |\la_{k+1}| } - \frac1{ |\la_k| } \ge \frac{ C_1
}{ \sqrt v_k } \log^{d-1} S(2v_k)
\]
for $ k=0,\dots,n-1 $; and $ 2v_0 \ge (2C_2)^d $. Then
\[
\phi_{v_n} (\la_n) \le \phi_{v_0} (\la_0) +
\sum_{k=0}^{n-1} \frac1{ R(2v_k) \log^{d-1} S(2v_k) } \, .
\]
\end{corollary}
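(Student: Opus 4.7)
The plan is a routine telescoping of Corollary~\ref{3.3} along the doubling chain $v_0,v_1,\dots,v_n$. For each $k\in\{0,\dots,n-1\}$ I would apply~\ref{3.3} with input volume $v_k$ (so the output volume is $2v_k=v_{k+1}$) and the pair of parameters $\la_k,\la_{k+1}$ in place of $\la_0,\la_1$. The three hypotheses of~\ref{3.3} match by design: the doubling assumption $v_{k+1}=2v_k$ and the sign condition $\la_k\la_{k+1}>0$ are built into the statement, and the spectral-gap inequality $\frac{\sqrt2}{|\la_{k+1}|}-\frac{1}{|\la_k|}\ge \frac{C_1}{\sqrt{v_k}}\log^{d-1}S(2v_k)$ is exactly what is assumed on the sequences. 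The remaining volume requirement $2v_k\ge(2C_2)^d$ propagates from $k=0$ to all $k$ because the sequence $(v_k)$ is doubling and therefore non-decreasing, so $2v_k\ge 2v_0\ge(2C_2)^d$ for every $k$.

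Once~\ref{3.3} is applied at each step, I would sum the resulting $n$ inequalities. Adding $\phi_{v_{k+1}}(\la_{k+1})\le \phi_{v_k}(\la_k)+\frac{1}{R(2v_k)\log^{d-1}S(2v_k)}$ over $k=0,\dots,n-1$, the intermediate $\phi_{v_k}$ terms cancel and one is left with $\phi_{v_n}(\la_n)\le \phi_{v_0}(\la_0)+\sum_{k=0}^{n-1}\frac{1}{R(2v_k)\log^{d-1}S(2v_k)}$, which is the claim. There is no genuine obstacle; the corollary is a bookkeeping consequence of~\ref{3.3}, and the only point that warrants a line of comment is the monotonicity of $v_k$ that ensures the lower bound on volumes carries through the whole chain.
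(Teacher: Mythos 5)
Your proposal is correct and matches the paper's own proof, which simply says to apply Corollary~\ref{3.3} $n$ times and telescope. The observation that $2v_k \ge 2v_0 \ge (2C_2)^d$ for all $k$ by monotonicity of the doubling sequence is the only (minor) detail needed, and you supply it.
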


\begin{proof}
Just apply \ref{3.3} $n$ times.
\end{proof}

We rewrite Theorem \ref{th} in terms of $ f_v $.

\begin{proposition}\label{3.6}
There exists $ C \in (1,\infty) $ such that for every $ \la \in \R $,
\[
\text{if } C|\la| \le \frac{ \sqrt v }{\log^d v } \text{ and } v \ge
C^d, \quad \text{then} \quad f_{v,C} (\la) \le C \la^2 \, .
\]
\end{proposition}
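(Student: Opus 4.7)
I will combine Proposition~\ref{2a4} (small $\la$) with the chain inequality Corollary~\ref{3.5} (to extend the range). The constant $C \in (1, \infty)$ will be chosen sufficiently large, depending on $C_1$, $C_2$, and $d$. Since $f_{v,C}(\la) \le f_{v,C_2}(\la) = f_v(\la)$ whenever $C \ge C_2$, it suffices to bound $f_v(\la)$.

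Fix $\la$ and $v$ satisfying the hypothesis, and write $\eta = |\la|/\sqrt v \le 1/(C\log^d v)$. If already $C_2|\la|\le \sqrt{S(v)}/\log^{d-1} v$ (Proposition~\ref{2a4}'s range), then $f_v(\la) \le C_2\la^2 \le C\la^2$ directly. Otherwise I invoke Corollary~\ref{3.5} with an intermediate volume $v_0 = v/2^n$ (integer $n \ge 0$) of size $\sim \log^{d^2} v$, specifically chosen so that $v_0^{1/d}\log^{d-1} v_0 \sim 1/\eta$. Taking the chain constraints with equality yields $1/|\la_0| = \sqrt{v/v_0}/|\la| - T/\sqrt{v_0}$, where $T = C_1 \sum_{k=0}^{n-1}\log^{d-1} S(2v_k) \le C_1'\log^d v$ (using $\log S(2v_k) \le \log S(v)$ and $n \le \log_2 v$). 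For $C \ge 2C_1'$, the hypothesis $\sqrt v/|\la| \ge C\log^d v$ dominates $T$, giving $|\la_0|/\sqrt{v_0} \le 2\eta$.

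Two conditions must be checked. First, Proposition~\ref{2a4} must apply at $(v_0,\la_0)$: using $|\la_0| \le 2\eta\sqrt{v_0}$, this reduces to $v_0^{1/(2d)}\log^{d-1} v_0 \le 1/(2C_2\eta)$. Squaring and invoking the defining relation $v_0^{1/d}\log^{d-1} v_0 \sim 1/\eta$ turns it into $\eta\log^{d-1} v_0 \le $ const, which holds for $C$ large since $\log v_0 \le \log v$ gives $\eta\log^{d-1} v_0 \le 1/(C\log v)$. Second, the error sum in Corollary~\ref{3.5} must be $O(\eta)$: by geometric decay in $k$ it is dominated by the $k = 0$ term $\sim 1/(v_0^{1/d}\log^{d-1} v_0) \sim \eta$ by construction. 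Applying Corollary~\ref{3.5} together with Proposition~\ref{2a4} then yields $\phi_v(\la) \le \phi_{v_0}(\la_0) + \text{sum} \le 2C_2\eta + O(\eta)$, whence $f_v(\la) = |\la|\sqrt v\,\phi_v(\la) \le O(\la^2) \le C\la^2$ for $C$ chosen large.

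The main technical obstacle is the simultaneous balance of the two conditions: the critical exponent $d^2$ emerges naturally as the only value that lets both constraints hold. A naive choice $v_0 \sim \log^{2d} v$ fails the sum bound for $d \ge 3$, while a much larger $v_0$ (say polynomial in $v$) fails Proposition~\ref{2a4}'s applicability. The valid window for $v_0$ is roughly $[\log^{d^2} v,\log^{2d^2} v]$ (with log-log corrections depending on $\eta$), wide enough to accommodate the constraint $v_0 = v/2^n$ for some integer $n$, and $v_0 \ge 2^{d-1} C_2^d$ is automatic for $v \ge C^d$ with $C$ sufficiently large.
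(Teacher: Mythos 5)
Your proposal is correct in substance and follows the same route as the paper: treat the range $C_2|\la|\le\sqrt{S(v)}/\log^{d-1}v$ by Proposition~\ref{2a4}, and otherwise run the dyadic chain of Corollary~\ref{3.5} from $v$ down to an intermediate volume $v_0=2^{-n}v$, use the closed-form recursion to get $|\la_0|/\sqrt{v_0}\le 2|\la|/\sqrt v$, apply Proposition~\ref{2a4} at $(v_0,\la_0)$, and bound the error sum by its first term. The only genuine difference is where you place $v_0$ inside the admissible window. The paper takes $v_0$ essentially as large as possible, saturating the applicability condition of \ref{2a4}: with $x=\sqrt v/(C|\la|)$ its choice \eqref{3.*} amounts to $v_0\asymp(x/M_d\log^{d-1}x)^{2d}$, and the constant $M_d\ge 2(2d)^{d-1}$ together with Lemmas \ref{3.7}, \ref{3.9}, \ref{3.13} is tuned exactly so that $n\ge1$ and the \ref{2a4}-condition at $(v_0,\la_0)$ hold automatically. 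You instead take $v_0$ as small as possible, saturating the error-sum condition, $v_0^{1/d}\log^{d-1}v_0\asymp 1/\eta$ with $\eta=|\la|/\sqrt v$ (roughly the square root of the paper's $v_0$); this buys simpler verifications (the trivial $n\le\log_2 v$ replaces Lemma \ref{3.9}, and the \ref{2a4}-condition holds with large slack, as you check), at the price that the existence of such a dyadic $v_0$ with $\const\le v_0\le v$ is no longer obvious and is the one point you should state and prove. Your heuristic ``window $[\log^{d^2}v,\log^{2d^2}v]$'' describes only the extreme case $C|\la|\asymp\sqrt v/\log^d v$; near the other edge of the hard regime, where $C_2|\la|$ is just above $\sqrt{S(v)}/\log^{d-1}v$, the $v_0$ defined by your relation is of order $C_2^d\sqrt v$ up to logarithms, so ``$\sim\log^{d^2}v$'' is inaccurate and the claim that $d^2$ is ``the only value'' of the exponent is not right (the paper works at the $2d^2$ end of the same window). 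The needed check does go through precisely because you cut the first case at the $C_2$-threshold rather than the $C$-threshold: in the remaining region $1/\eta< C_2\,v^{1/(2d)}\log^{d-1}v$, while $v\ge C^d$ gives $v^{1/d}\log^{d-1}v\ge\sqrt C\,v^{1/(2d)}\log^{d-1}v$, so $C\ge\const\cdot C_2^2$ guarantees a dyadic $v_0\le v$ (and then $v_0\ge 2^{d-1}C_2^d$ as you note); add that line and your argument is a complete, slightly leaner variant of the paper's proof.
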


Taking $ C \ge \max( C_2, \E^{1/d} ) $ we see that the case $ C
|\la| \le \frac{ \sqrt{S(v)} }{ \log^{d-1} v } $ is covered
by \eqref{3.1}.

From now on we assume that $ v \ge C^d $ and
\begin{equation}\label{3.**}
\frac{ \sqrt{S(v)} }{ \log^{d-1} v } < C|\la| \le \frac{ \sqrt v
}{\log^d v } \, ;
\end{equation}
ultimately we'll prove that $ f_{v,C} (\la) \le C \la^2 $ provided
that the constant $C$, dependent only on $ d, C_1, C_2 $, is large
enough.

We take integer $n$ such that
\begin{equation}\label{3.*}
2^{n-1} < M_d^{2d} \frac{ (C|\la|)^{2d} }{ v^{d-1} }
\log^{2d(d-1)} \frac{ \sqrt v }{ C|\la| } \le 2^n
\end{equation}
(the constant $M_d$, dependent on $d$ only, will be chosen later).

\begin{lemma}\label{3.7}
If $ M_d \ge (2d)^{d-1} $ and $ C \ge \E $, then $ n \ge 1 $.
\end{lemma}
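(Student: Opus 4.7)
The statement $n \ge 1$ is equivalent, by \eqref{3.*}, to the assertion
\[
X := M_d^{2d} \frac{(C|\la|)^{2d}}{v^{d-1}} \log^{2d(d-1)} \frac{\sqrt v}{C|\la|} \;>\; 1 .
\]
The plan is to reparametrize cleanly and then invoke a one-variable monotonicity argument. Setting $u := \log\bigl(\sqrt v/(C|\la|)\bigr)$, so that $C|\la| = \sqrt v \, \E^{-u}$, a direct computation gives $(C|\la|)^{2d}/v^{d-1} = v\, \E^{-2du}$, whence
\[
X = M_d^{2d} \, v\, u^{2d(d-1)} \E^{-2du} = \bigl( M_d\, v^{1/(2d)} u^{d-1} \E^{-u} \bigr)^{2d}.
\]
So it suffices to show $M_d\, v^{1/(2d)} u^{d-1} \E^{-u} > 1$.

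The two bounds in \eqref{3.**} translate immediately into bounds on $u$: the upper bound $C|\la| \le \sqrt v / \log^d v$ gives $u \ge d \log\log v$, while the strict lower bound $C|\la| > \sqrt{S(v)} / \log^{d-1} v$ gives $u < u^\ast := \log v / (2d) + (d-1) \log\log v$. The key analytic input is that the function $h(u) := u^{d-1} \E^{-u}$ has derivative $u^{d-2} \E^{-u} \bigl( (d-1) - u \bigr)$, so $h$ is strictly decreasing on $[d-1, \infty)$ with its maximum at $u = d-1$ (and for $d=1$ the convention $u^{d-1} = 1$ makes $h$ strictly decreasing everywhere).

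The crux is to place the interval $[d\log\log v, u^\ast)$ entirely inside $[d-1, \infty)$, so that monotonicity gives $h(u) > h(u^\ast)$ strictly. Under the hypotheses $C \ge \E$ and $v \ge C^d$ one has $\log v \ge d$, which I would use to check (i)~$d\log\log v \ge d-1$ and (ii)~$u^\ast > d-1$. Both facts reduce to the elementary inequality $\log d \ge 1 - 1/d$ for $d \ge 1$ (trivial for $d=1$, while $\log 2 > 1/2$ and $\log d \ge 1$ for $d \ge 3$ cover the rest); this numerical verification, handled separately for $d=1$ and $d\ge2$, is the only real obstacle. With (i) and (ii) in hand, $u \in [d-1, u^\ast)$ and strict monotonicity yields
\[
h(u) \;>\; h(u^\ast) \;=\; \Bigl( \frac{u^\ast}{\log v} \Bigr)^{d-1} v^{-1/(2d)} \;\ge\; \Bigl( \frac{1}{2d} \Bigr)^{d-1} v^{-1/(2d)} ,
\]
where I used $u^\ast / \log v = 1/(2d) + (d-1)\log\log v / \log v \ge 1/(2d)$. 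Multiplying by $M_d v^{1/(2d)} \ge (2d)^{d-1} v^{1/(2d)}$ yields $M_d v^{1/(2d)} h(u) > 1$, which is precisely the reformulated goal, and thus $n \ge 1$.
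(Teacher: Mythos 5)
Your proof is correct. The reduction of $n\ge1$ to $M_d^{2d}\,(C|\la|)^{2d}v^{-(d-1)}\log^{2d(d-1)}\frac{\sqrt v}{C|\la|}>1$ is exactly what \eqref{3.*} gives, the substitution $u=\log\frac{\sqrt v}{C|\la|}$ is legitimate (both sides of \eqref{3.**} are positive and $v\ge C^d\ge\E^d$ makes $u$ well defined with $d\log\log v\le u<u^\ast=\frac{\log v}{2d}+(d-1)\log\log v$), and the chain $h(u)>h(u^\ast)=(u^\ast/\log v)^{d-1}v^{-1/(2d)}\ge(2d)^{-(d-1)}v^{-1/(2d)}$ is valid because $u\ge d\log\log v\ge d\log d\ge d-1$ places the whole interval in the region where $h(u)=u^{d-1}\E^{-u}$ is strictly decreasing; the strict inequality needed to force $n\ge1$ (rather than $n\ge0$) comes from the strict lower bound in \eqref{3.**}, as it should. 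This is, however, a genuinely different route from the paper's. The paper argues by contradiction: assuming \eqref{3.8}, it first combines it with \eqref{3.**} to deduce $C|\la|>\sqrt{S(v)}$, then feeds this back into \eqref{3.8} to get $M_d\log^{d-1}\frac{\sqrt v}{C|\la|}<1$, hence $2d\log\log^d v<1$, contradicted numerically by $2d^2\log d\ge 8\log 2>1$ (with a separate trivial case $d=1$). Your direct, calculus-based argument is more unified (no contradiction, no real case split beyond the $d=1$ convention) and makes transparent why the threshold $M_d\ge(2d)^{d-1}$ is the right one: it exactly offsets $(u^\ast/\log v)^{d-1}\ge(2d)^{-(d-1)}$ at the endpoint $u^\ast$ determined by \eqref{3.**}. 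What the paper's manipulation buys is that it stays with elementary inequality algebra in the original variables and, as a by-product, isolates the impossibility of $M_d\log^{d-1}\frac{\sqrt v}{C|\la|}<1$, a fact that is cited again verbatim in the proof of the later lemma preceding \eqref{3.****}; if your proof replaced the paper's, that auxiliary fact would need to be supplied there separately (it does follow instantly from your bounds, since $u^{d-1}\ge1$ once $u\ge d-1$, and trivially for $d=1$).
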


\begin{proof}
Assume the contrary: $ M_d^{2d} \frac{ (C|\la|)^{2d} }{ v^{d-1} }
\log^{2d(d-1)} \frac{ \sqrt v }{ C|\la| } \le 1 $, that is,
\begin{equation}\label{3.8}
M_d \frac{ C|\la| }{ \sqrt{S(v)} } \log^{d-1} \frac{ \sqrt v }{ C|\la|
} \le 1 \, .
\end{equation}
For $d=1$ it means $ M_1 C |\la| \le 1 $ in contradiction to $ C|\la|>1
$ and $ M_1 \ge 1 $. Assume $ d \ge 2 $. Using \eqref{3.**},
\begin{gather*}
\frac{ \sqrt{S(v)} }{ \log^{d-1} v } < C|\la| \le
 \frac1{M_d} \sqrt{S(v)} \log^{-(d-1)} \frac{ \sqrt v }{ C|\la| } \, ; \\
\log^{d-1} v > M_d \log^{d-1} \frac{ \sqrt v }{ C|\la| } \ge \Big(
 2d \log \frac{ \sqrt v }{ C|\la| } \Big)^{d-1} \, ;
\end{gather*}
thus, $ \frac1{2d} \log v > \log \frac{\sqrt v}{C|\la|} $, that is, $
C|\la| > \sqrt{S(v)} $. Now \ref{3.8} gives $
M_d \log^{d-1} \frac{\sqrt v}{C|\la|} < 1 $, which implies $
M_d \log^{d-1} \log^d v < 1 $ (since $ C|\la| \le \frac{ \sqrt v
}{\log^d v } $ by \eqref{3.**}), and $ 2d \log \log^d v < 1 $ (since $ M_d \ge
(2d)^{d-1} $). On the other hand, $ v \ge C^d \ge \E^d $ implies
$ \log v \ge d $ and $ \log \log^d v = d \log \log v \ge d \log d $,
thus $ 2d \log \log^d v \ge 2d^2 \log d \ge 8\log2 > 1 $; a
contradiction. 
\end{proof}

From now on we assume $ M_d \ge (2d)^{d-1} $ and $ C \ge \E $ (thus, $
n \ge 1 $). We define $ v_0, \dots, v_n $ by
\[
v_k = 2^{-(n-k)} v \, .
\]

Below, ``$ y = \cO(x) $'' means that $ y \le \const \cdot x $ for some
constant dependent on $d$ only.

\begin{lemma}\label{3.9}
$ n \le \frac{ \sqrt v }{ 2 C_1 |\la| } \log^{-(d-1)} v $ provided that $C$
is large enough.
\end{lemma}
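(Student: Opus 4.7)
My plan is to bound $n$ above by $\log_2 v + K_d$ with $K_d$ depending only on $d$, and the right-hand side of the claim below by $(C/(2C_1))\log v$; then for $C$ large enough in terms of $d, C_1, C_2$, the inequality $\log_2 v + K_d \le (C/(2C_1))\log v$ follows from $v \ge C^d$.

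For the upper bound on $n$, the right inequality of \eqref{3.*} gives $2^n \le 2 M_d^{2d} (C|\la|)^{2d} v^{-(d-1)} \log^{2d(d-1)}(\sqrt v/(C|\la|))$. Applying the upper half of \eqref{3.**}, namely $C|\la| \le \sqrt v/\log^d v$, I obtain $(C|\la|)^{2d}/v^{d-1} \le v\log^{-2d^2} v$. Next, the lower half of \eqref{3.**} together with $v\ge C^d$ (large) forces $C|\la|\ge 1$ — trivial for $d=1$ since both sides of the lower bound collapse to constants, and for $d\ge 2$ because $\sqrt{S(v)} = v^{(d-1)/(2d)}$ eventually dominates $\log^{d-1} v$. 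Hence $\log(\sqrt v/(C|\la|))\le\tfrac12\log v$ and thus $\log^{2d(d-1)}(\sqrt v/(C|\la|))\le 2^{-2d(d-1)}\log^{2d(d-1)} v$. Combining, $2^n \le 2^{1-2d(d-1)} M_d^{2d}\cdot v\log^{-2d}v$, so $n\le\log_2 v+K_d$ with $K_d$ depending only on $d$ (through $M_d$).

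For the lower bound on the right-hand side, the upper half of \eqref{3.**} gives $\sqrt v/|\la|\ge C\log^d v$, so $\sqrt v/(2C_1|\la|)\cdot\log^{-(d-1)} v\ge (C/(2C_1))\log v$. It therefore suffices to verify $\log_2 v + K_d\le (C/(2C_1))\log v$. This requires first $1/\log 2<C/(2C_1)$, i.e., $C>2C_1/\log 2$, and then $\log v\ge K_d/((C/(2C_1))-1/\log 2)$; since $v\ge C^d$ gives $\log v\ge d\log C$, a further enlargement of $C$ (still depending only on $d$ and $C_1$) makes the last inequality automatic. The only real obstacle is bookkeeping — ensuring that $K_d$ depends on $d$ only and that the final $C$ is a function of $d, C_1, C_2$ alone — but no analytic input beyond the two inequalities of \eqref{3.**} is needed.
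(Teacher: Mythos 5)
Your argument is correct and follows essentially the same route as the paper: both use the right inequality of \eqref{3.*} together with the two halves of \eqref{3.**} to get $n=\cO(\log v)$ (you make the constant explicit as $n\le\log_2 v+K_d$, the paper settles for $2^n=\cO(v^2)$), and then compare with $\frac{C}{2C_1}\log v\le\frac{\sqrt v}{2C_1|\la|}\log^{-(d-1)}v$ after enlarging $C$ (depending only on $d$ and $C_1$). No substantive difference.
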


\begin{proof}
It is sufficient to prove that $ n = \cO(\log v) $; then, increasing
$C$ as needed, we get $ n \le \frac{ C }{ 2 C_1 } \log
v \le \frac{ \sqrt v }{ 2 C_1 |\la| } \log^{-(d-1)} v $ since $
C|\la| \le \frac{\sqrt v}{ \log^d v } $ by \eqref{3.**}.

We have $ \frac1{C|\la|} = \cO(1) $ (since by \eqref{3.**},
$ \frac1{C|\la|} \le \frac{\log^{d-1} v}{ \sqrt{S(v)} } $, the
latter being bounded in $ v \in (1,\infty) $).
Thus, $ \log \frac{\sqrt v}{C|\la|} = \cO(\log v) $ (since $ \log
v \ge \log C^d \ge d \ge 1 $). Also, $ C|\la| \le \frac{\sqrt
v}{ \log^d v } \le \sqrt v $. Using \eqref{3.*},
\[
2^n \le 2 M_d^{2d} \frac{ (C|\la|)^{2d} }{ v^{d-1} }
\log^{2d(d-1)} \frac{ \sqrt v }{ C|\la| } = \cO ( v \log^{2d(d-1)} v )
= \cO(v^2) \, ,
\]
which implies $ n = \cO(\log v) $.
\end{proof}

Having $ n \log^{d-1} v \le \frac{ \sqrt v }{ 2 C_1 |\la| } $ (ensured
by Lemma \ref{3.9}) we define $ \la_0,\dots,\la_n $ (either all
positive or all negative) by
\[
\frac{ \sqrt2 }{ |\la_{k+1}| } - \frac1{ |\la_k| }
= \frac{ C_1 }{\sqrt{v_k}} \log^{d-1} S(2v_k) \text{ for }
k=0,\dots,n-1 \, ; \quad \text{and} \quad \la_n = \la \, .
\]
That is,
\[
\frac1{ 2^{k/2} |\la_{n-k}| } = \frac1{|\la|} - \frac{ C_1 }{\sqrt
v} \sum_{i=0}^{k-1} \log^{d-1} S(2^{-i}v) \, ;
\]
the right-hand side is positive, since
\[
\sum_{i=0}^{k-1} \log^{d-1} S(2^{-i}v) \le k \log^{d-1} S(v) \le
n \log^{d-1} v \le \frac{ \sqrt v }{ 2 C_1 |\la| } \, ;
\]
moreover, for $ k=n $ we get
\begin{equation}\label{3.***}
2^{n/2} |\la_0| \le 2 |\la| \, .
\end{equation}

\begin{lemma}\label{3.13}
If $C$ is large enough, then $ 2v_0 \ge (2C_2)^d $.
\end{lemma}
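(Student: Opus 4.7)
The statement $2v_0\ge(2C_2)^d$ is equivalent, after substituting $v_0=2^{-n}v$, to the upper bound $2^n\le 2v/(2C_2)^d$. So the plan is to feed the already-established upper bound on $2^n$ from \eqref{3.*} into the already-established upper bound on $C|\la|$ from \eqref{3.**}, watch the powers of $\log v$ cancel, and then use $v\ge C^d$ with $C$ large.

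First I would apply the right-hand inequality of \eqref{3.*} to write
\[
2^n \le 2 M_d^{2d}\,\frac{(C|\la|)^{2d}}{v^{d-1}}\,\log^{2d(d-1)}\frac{\sqrt v}{C|\la|}\,.
\]
Second, from the upper bound $C|\la|\le\sqrt v/\log^d v$ in \eqref{3.**} I would substitute $(C|\la|)^{2d}\le v^d/\log^{2d^2} v$, and bound $\log\bigl(\sqrt v/(C|\la|)\bigr)\le\log v$ (valid since $C|\la|\ge 1/\sqrt v$ trivially). The exponents of $\log v$ are then designed to almost cancel, $2d(d-1)-2d^2=-2d$, giving
\[
2^n \le \frac{2M_d^{2d}\,v}{\log^{2d} v}\,,\qquad\text{hence}\qquad 2v_0 \,=\,\frac{2v}{2^n}\,\ge\,\frac{\log^{2d} v}{M_d^{2d}}\,.
\]

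Finally, I would use $v\ge C^d$ with $C\ge\E$ to estimate $\log v\ge d\log C$, so that $2v_0\ge (d\log C)^{2d}/M_d^{2d}$. This exceeds $(2C_2)^d$ once $d\log C\ge M_d\sqrt{2C_2}$, that is, once $C$ is chosen large enough in terms of $d$, $C_2$ and the already-fixed $M_d$. There is no real obstacle here; the only point that has to be watched is the arithmetic of the exponents: the factor $\log^{2d(d-1)}$ in \eqref{3.*} was precisely tuned so that, after saturating \eqref{3.**}, one is left with a positive power of $\log v$ in the denominator of the bound on $2^n$, which is exactly what forces $2v_0\to\infty$ (slowly) as $v\to\infty$ and lets the threshold $(2C_2)^d$ be absorbed into the final choice of $C$.
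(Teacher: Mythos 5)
Your argument is correct and is essentially the paper's own: both proofs combine the upper bound on $2^n$ coming from \eqref{3.*} with $C|\la|\le\sqrt v/\log^d v$ from \eqref{3.**} and then use $v\ge C^d$, the only cosmetic difference being that the paper keeps $x=\sqrt v/(C|\la|)$ and bounds $x/\log^{d-1}x$ from below on $[(d\log C)^d,\infty)$, while you replace $\log x$ by $\log v$, arriving at the same $2v_0\ge(\log v/M_d)^{2d}\ge(d\log C/M_d)^{2d}$. Two small touch-ups: the inequality you invoke is the \emph{left}-hand part of \eqref{3.*} (the right-hand part bounds $2^n$ from below), and $C|\la|\ge1/\sqrt v$ is not literally trivial for $d\ge2$ --- it follows from the lower bound in \eqref{3.**} because $\log^{d-1}v/\sqrt{S(v)}$ is bounded (as in Lemma \ref{3.9}) and $v\ge C^d$ with $C$ large, which is harmless here since ``$C$ large enough'' is exactly the hypothesis.
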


\begin{proof}
For $C$ large enough we have $ \frac{x}{\log^{d-1}x} \ge
M_d \sqrt{2C_2} $ for all $ x \in [ (d\log C)^d, \infty ) $. We take $
x = \frac{ \sqrt v }{ C |\la| } $; using \eqref{3.**}, $ x \ge \log^d
v \ge \log^d C^d = (d\log C)^d $, thus $ \frac{x}{\log^{d-1}x} \ge
M_d \sqrt{2C_2} $. Using \eqref{3.*}, $ 2v_0 = 2^{-(n-1)} v >
M_d^{-2d} \frac{ v^d }{ (C|\la|)^{2d} } \log^{-2d(d-1)} \frac{ \sqrt v
}{ C |\la| } = \( \frac1{M_d} x \log^{-(d-1)}
x \)^{2d} \ge \( \sqrt{2C_2} \)^{2d} $.
\end{proof}

From now on we assume that $C$ is large enough, so that $ 2v_0 \ge
(2C_2)^d $. Now Corollary \ref{3.5} applies:
\[
\frac{ f_v (\la) }{ |\la| \sqrt v } \le \frac{ f_{v_0} (\la_0)
}{ |\la_0| \sqrt{v_0} } +
\sum_{k=0}^{n-1} \frac1{ R(2v_k) \log^{d-1} S(2v_k) } \, .
\]

\begin{lemma}
If $ M_d \ge 2 (2d)^{d-1} $, then $ C|\la_0| \le \frac{ \sqrt{S(v_0)}
}{ \log^{d-1} v_0 } $.
\end{lemma}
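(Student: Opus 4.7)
The plan is to reduce the target inequality $C|\la_0|\log^{d-1}v_0\le \sqrt{S(v_0)}$ to a single bound on $\log v_0$ in terms of $L:=\log(\sqrt v/(C|\la|))$, and then verify that bound by invoking \eqref{3.*} twice.

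First I would substitute $v_0=2^{-n}v$ in $\sqrt{S(v_0)}=v_0^{(d-1)/(2d)}$ and use \eqref{3.***}, which gives $C|\la_0|\le 2C|\la|\cdot 2^{-n/2}$. Raising the target inequality to the $2d$-th power and clearing factors of $2^n$, it suffices to show
\[
2^n \ge 2^{2d}\frac{(C|\la|)^{2d}}{v^{d-1}}\log^{2d(d-1)} v_0.
\]
Comparing with the lower bound $2^n\ge M_d^{2d}(C|\la|)^{2d}v^{-(d-1)}L^{2d(d-1)}$ from \eqref{3.*} and taking $2d$-th roots, this in turn reduces to $\log^{d-1}v_0\le (M_d/2)\,L^{d-1}$. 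For $d=1$ this is just $M_1\ge 2$, which is included in the hypothesis $M_d\ge 2(2d)^{d-1}$.

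For $d\ge 2$ I would use the same lower bound from \eqref{3.*} once more, this time to upper-bound $v_0=v/2^n$, obtaining
\[
\log v_0 \le 2dL - 2d\log M_d - 2d(d-1)\log L
\]
(since $d\log v-2d\log(C|\la|)=2dL$). The hypotheses $v\ge C^d\ge\E^d$ together with \eqref{3.**} force $L\ge d\log\log v\ge d\log d>1$ for $d\ge 2$, so both correction terms are nonpositive and $\log v_0\le 2dL$. The threshold $M_d\ge 2(2d)^{d-1}$ is equivalent to $(M_d/2)^{1/(d-1)}\ge 2d$, which yields $\log v_0\le (M_d/2)^{1/(d-1)}L$, precisely the required inequality $\log^{d-1}v_0\le (M_d/2)L^{d-1}$.

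The main obstacle is the double invocation of \eqref{3.*}: the same lower bound must control both the $2^{-n/2}$ factor absorbed into $|\la_0|$ via \eqref{3.***} and the size of $v_0=2^{-n}v$ itself. The constant $2(2d)^{d-1}$ is calibrated exactly so that the factor $2d$ arising from $d\log v$ in the upper bound for $\log v_0$ is absorbed.
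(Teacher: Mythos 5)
Your proof is correct and relies on the same two ingredients as the paper's: \eqref{3.***} to pass from $\la_0$ to $\la$, and \eqref{3.*} invoked twice (once to match $2^n$ against the target, once to bound $v_0=v/2^n$). The difference is presentational but worth noting: the paper argues by contradiction, assuming $C|\la_0|>\sqrt{S(v_0)}/\log^{d-1}v_0$, tracing this through to $2^n<(C|\la|)^{2d}/v^{d-1}$, and then invoking the lower half of \eqref{3.*} to reach $M_d L^{d-1}<1$, which is contradicted by routing back through the computation in the proof of Lemma~\ref{3.7}. You instead derive the bound $\log v_0\le 2dL-2d\log M_d-2d(d-1)\log L$ directly from the lower half of \eqref{3.*}, use $L\ge d\log\log v\ge d\log d>1$ (same inputs: $v\ge C^d\ge\E^d$ and \eqref{3.**}) to drop the correction terms, and close with $(M_d/2)^{1/(d-1)}\ge 2d$. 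This is a little tidier because it avoids the backward reference to Lemma~\ref{3.7}, though both arguments ultimately rest on exactly the same quantitative facts. One small point you should make explicit: raising $\log v_0\le (M_d/2)^{1/(d-1)}L$ to the $(d-1)$-th power is legitimate because $v_0>1$ (from $2v_0\ge(2C_2)^d$, established before this lemma) so $\log v_0\ge0$, and $L>1$; you have the ingredients but don't quite say this.
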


\begin{proof}
Assume the contrary. Using \eqref{3.***}, $ 2C \cdot 2^{-n/2}
|\la| \ge C|\la_0| > \frac{ \sqrt{S(2^{-n}v)} }{ \log^{d-1} (2^{-n}v)
} = 2^{-n/2} \cdot 2^{\frac{n}{2d}} v^{\frac{d-1}{2d}} \log^{-(d-1)}
(2^{-n}v) $; using \eqref{3.*}, $ M_d^{2d} \frac{ (C|\la|)^{2d} }{
v^{d-1} } \log^{2d(d-1)} \frac{ \sqrt v }{ C|\la| } \le 2^n < \frac{
(2C|\la|)^{2d} }{ v^{d-1} }\log^{2d(d-1)} (2^{-n}v) $ and $ \(
2d \log \frac{ \sqrt v }{ C|\la| } \)^{d-1} \le \frac12
M_d \log^{d-1} \frac{ \sqrt v }{ C|\la| } < \log^{d-1} (2^{-n}v) $.
For $d=1$ it means $ 1<1 $. Assume $ d \ge 2 $. We have
$ 2d \log \frac{ \sqrt v }{ C|\la| } < \log (2^{-n}v) $ and
$ \( \frac{ \sqrt v }{ C|\la| } \)^{2d} < 2^{-n} v $, that is, $ 2^n <
v \( \frac{ C|\la| }{ \sqrt v } \)^{2d} = \frac{ (C|\la|)^{2d} }{
v^{d-1} } $. Using \eqref{3.*}, $ M_d^{2d} \log^{2d(d-1)} \frac{ \sqrt
v }{ C|\la| } < 1 $, that is, $ M_d \log^{d-1} \frac{ \sqrt v }{
C|\la| } < 1 $, which cannot be true, as was shown in the proof of
Lemma \ref{3.7}.
\end{proof}

From now on we assume that $ M_d \ge 2 (2d)^{d-1} $, so that
$ C|\la_0| \le \frac{ \sqrt{S(v_0)} }{ \log^{d-1} v_0 }
$. Also, $ v_0 \ge C_2^d $ by \ref{3.13}, and $ C_2 \le C $.
Now \eqref{3.1} applies: $ f_{v_0} (\la_0) \le C_2 \la_0^2 $; and
therefore,
\begin{equation}\label{3.****}
\frac{ f_v (\la) }{ |\la| \sqrt v } \le \frac{ C_2 |\la_0|
}{ \sqrt{v_0} } + \sum_{k=0}^{n-1} \frac1{ R(2v_k) \log^{d-1}
S(2v_k) } \, .
\end{equation}

\begin{lemma}\label{3.16}
$ \sum_{k=0}^{n-1} \frac1{ R(2v_k) \log^{d-1} S(2v_k) }
= \cO \( \frac{C|\la|}{\sqrt v} \) $.
\end{lemma}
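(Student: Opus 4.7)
My plan is to observe that the $k$-th summand of the sum decreases geometrically in $k$ at ratio $\le 2^{-1/d} < 1$, so the whole sum is bounded by a $d$-dependent constant times just the $k = 0$ summand; I will then estimate that single term using the upper bound on $2^{n-1}$ from \eqref{3.*} together with Lemma~\ref{3.13}. Write $T := \log\bigl( \sqrt v / (C|\la|) \bigr)$ for brevity. Since $2v_{k+1} = 2 \cdot 2v_k$ and, by Lemma~\ref{3.13}, $2v_0 \ge (2C_2)^d > 1$, the quantity $\log(2v_k)$ is positive and increasing in $k$; hence the ratio of successive summands equals $2^{-1/d} [\log(2v_k)/\log(2v_{k+1})]^{d-1} \le 2^{-1/d}$. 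Summing the resulting super-geometric series gives
$$ \sum_{k=0}^{n-1} \frac{1}{R(2v_k) \log^{d-1} S(2v_k)} = \cO \left( \frac{1}{R(2v_0) \log^{d-1} S(2v_0)} \right) \, . $$

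For the $k = 0$ term, \eqref{3.*} yields $2v_0 = v/2^{n-1} > v^d / \bigl[ M_d^{2d}(C|\la|)^{2d} \log^{2d(d-1)} T \bigr]$, whence $R(2v_0) = (2v_0)^{1/d} > v / \bigl[ M_d^2 (C|\la|)^2 \log^{2(d-1)} T \bigr]$. Lemma~\ref{3.13} moreover ensures that $\log^{d-1} S(2v_0)$ is bounded below by a positive $d$-dependent constant (and equals $1$ if $d=1$). Combining,
$$ \frac{1}{R(2v_0)\log^{d-1} S(2v_0)} = \cO \left( \frac{(C|\la|)^2 \log^{2(d-1)} T}{v} \right) \, . $$

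Finally I write $(C|\la|)^2/v = (C|\la|/\sqrt v)\, \E^{-T}$; the task then reduces to showing $\E^{-T} \log^{2(d-1)} T = \cO(1)$. By \eqref{3.**}, $T \ge d \log\log v$, so enlarging $C$ if necessary forces $T \ge 1$, and then $\log T \le T$ gives $\E^{-T}\log^{2(d-1)}T \le \E^{-T}T^{2(d-1)} \le [2(d-1)/\E]^{2(d-1)}$ (with the convention value $1$ for $d = 1$). This produces the desired $\cO(C|\la|/\sqrt v)$. The main obstacle is the first step—recognising that the monotonicity of $\log(2v_k)$ supplied by Lemma~\ref{3.13} suffices to make the summands super-geometric and collapse the sum to its first term; once that is secured, the rest is a direct substitution of \eqref{3.*} together with the classical bound $\sup_{T \ge 1} \E^{-T}T^{2(d-1)} < \infty$.
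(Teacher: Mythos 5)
Your argument is correct and essentially the paper's own proof in a slightly different dress: collapsing the sum to its $k=0$ term via the successive-term ratio $\le 2^{-1/d}$ is the same as the paper's factoring out $\log^{-(d-1)}S(2v_0)$ and summing $\sum_k 2^{-k/d}$, the use of \eqref{3.*} to control $1/R(2v_0)$ (equivalently $(2^n/v)^{1/d}$) is identical, and your closing bound $\sup_{T>0}\E^{-T}T^{2(d-1)}<\infty$ is exactly the paper's boundedness of $x\mapsto\log^{2d-2}(x)/x$ on $[1,\infty)$. One slip to fix: when quoting \eqref{3.*} you write $\log^{2d(d-1)}T=(\log T)^{2d(d-1)}$, whereas \eqref{3.*} actually contains $\log^{2d(d-1)}\frac{\sqrt v}{C|\la|}=T^{2d(d-1)}$; this is harmless, because after the correction the quantity you need is precisely $\E^{-T}T^{2(d-1)}=\cO(1)$, which your final inequality already establishes (and the intermediate step $\log T\le T$, together with the need to enlarge $C$ to force $T\ge1$, simply drops out).
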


\begin{proof}
We rewrite the sum as $ \sum_{k=1}^n (2^{-(n-k)} v)^{-1/d}
\log^{-(d-1)} S(v_k) \le \linebreak
\( \log^{-(d-1)} S(2v_0) \) (2^{-n}
v)^{-1/d} \sum_{k=1}^\infty 2^{-k/d} $, note that $ \log^{-(d-1)}
S(2v_0) = \cO(1) $ (since $ C_2 > 1 $ and $ 2v_0 \ge (2C_2)^d \ge 2 $)
and see that the given sum is $ \cO \( \(\frac{2^n}{v}\)^{1/d} \)
$. By \eqref{3.*}, $ \(\frac{2^n}{v}\)^{1/d} < 2^{1/d} M_d^2 \frac{
(C|\la|)^2 }{ v } \log^{2d-2} \frac{ \sqrt v}{ C|\la| }
=\linebreak
\cO \( \( \frac{ C|\la| }{ \sqrt v } \)^2 \log^{2d-2} \frac{ \sqrt
v}{ C|\la| } \) $. It remains to note that $ \frac{ C|\la| }{ \sqrt v
} \log^{2d-2} \frac{ \sqrt v}{ C|\la| } = \cO(1) $, since the function
$ x \mapsto \frac{ \log^{2d-2} x }{ x } $ is bounded on $ [1,\infty) $
and, using \eqref{3.**}, $ \frac{ C|\la| }{ \sqrt v } \le \frac1{\log^d
v} \le \frac1{(d\log C)^d} \le \frac1{d^d} \le 1 $.
\end{proof}

\begin{proof}[Proof of Prop.~\ref{3.6}]
By \eqref{3.****} and Lemma \ref{3.16}, $ \frac{ f_v (\la) }{
|\la| \sqrt v } \le \frac{ C_2 |\la_0| }{ \sqrt{v_0} } + N_d C \frac{
|\la| }{ \sqrt v } $ for some constant $ N_d $ dependent on $d$
only. By \eqref{3.***}, $ \frac{ |\la_0| }{ \sqrt{v_0} } \le \frac{
2|\la| }{ 2^{n/2} \sqrt{v_0} } = \frac{ 2|\la| }{ \sqrt v } $. Thus, $
f_v(\la) \le |\la| \sqrt v \( C_2 \frac{ 2|\la| }{ \sqrt v } +
N_d C \frac{ |\la| }{ \sqrt v } \) \le (2 C_2 + N_d C) \la^2 $.
And finally, $ f_{v,2C_2+N_d C}(\la) \le f_{v,C_2}(\la) = f_v(\la) $.
\end{proof}

Theorem \ref{th} is thus proved.

\bigskip
\filbreak
{
\small
\begin{sc}
\parindent=0pt\baselineskip=12pt
\parbox{4in}{
Boris Tsirelson\\
School of Mathematics\\
Tel Aviv University\\
Tel Aviv 69978, Israel
\smallskip
\par\quad\href{mailto:tsirel@post.tau.ac.il}{\tt
 mailto:tsirel@post.tau.ac.il}
\par\quad\href{http://www.tau.ac.il/~tsirel/}{\tt
 http://www.tau.ac.il/\textasciitilde tsirel/}
}

\end{sc}
}
\filbreak

\end{document}